\numberwithin{equation}{section}
\newtheorem{theorem}{Theorem}
\newtheorem{prop}{Proposition}
\newtheorem{definition}{Definition}
\newtheorem{remark}{Remark}
\newtheorem{corollary}{Corollary}
\newtheorem{lemma}{Lemma}
\newcommand{\qed}{\(\Box\)}
\newcommand{\N}{\mathbb{N}}
\newcommand{\Z}{\mathbb{Z}}
\newcommand{\openrm}{\mathrm{(}}
\newcommand{\closerm}{\mathrm{)}}
\newcommand{\dd}{\mathrm{d}}
\def\dim{\hbox{\rm dim}\,}
\numberwithin{lemma}{subsection} \numberwithin{theorem}{subsection}
\numberwithin{prop}{subsection} \numberwithin{remark}{subsection}
\numberwithin{definition}{subsection} \numberwithin{corollary}{subsection}
\title{Solvability of linear equations within weak mixing sets}
\author{Alexander Fish}
\begin{document}

\maketitle

\begin{abstract} We introduce a new class of ``random" subsets of natural
numbers,  WM sets. This class contains normal sets (sets whose characteristic function is a normal 
binary sequence). We establish necessary and sufficient conditions for
solvability of systems of linear equations within every WM set and within every normal set. We also show that any partition-regular system of linear equations with integer coefficients is solvable in any WM set.
\end{abstract}

\section{Introduction}

\subsection{Algebraic patterns within subsets of  \(\mathbb{N}\)}

\noindent  We use extensively  the notion of ``algebraic pattern".
 By an algebraic pattern we mean a solution of a diophantine system of equations. 
For example, an arithmetic
progression of length \( k \) is an algebraic pattern corresponding to the following diophantine system:
\[
2 x_i = x_{i-1}+ x_{i+1}, i=2,3,\ldots,k-1.
\]

\noindent We investigate the problem of finding linear algebraic patterns (these correspond to linear systems) within a family of  subsets of natural numbers satisfying some asymptotic conditions.

\noindent For instance,  by Szemer\'{e}di theorem, subsets of  positive upper Banach density 
(all \( S \subset \mathbb{N}: d^*(S) > 0 \), where \( d^*(S) = \limsup_{b_n - a_n \to \infty} \frac{|S \cap [a_n,b_n]|}{b_n-a_n+1}\)) contain the pattern of an arithmetic progression of any finite length (see \cite{szemeredi}). 
%\noindent We recall that van der Waerden's theorem states:

%\noindent
%\textit{For every partitioning of \(\mathbb{N}\) into a finite number of sets \( C_1, \ldots,C_r \) at least
%  one of the subsets contains arbitrarily long arithmetic progressions.}
  
%\noindent

%By Szemer\'{e}di's theorem we know that the pattern of arithmetic progressions of a finite length is contained in any subset
%of \( \mathbb{N} \) of positive upper density.

%
%\noindent There are algebraic patterns which are \textit{partition-regular};
%i.e., one of the subsets of the finite partition necessarily contains the algebraic pattern, but no simple density
%condition implies that the pattern will be found.
%As an
%example of this we present Schur's theorem, \cite{schur}:

\noindent On the other hand, \textit{Schur patterns}, namely triples of the form  \( \{ x,y,x+y\}  \), which correspond to solutions of the so-called Schur equation, \( x+y = z \),  do not necessarily occur in sets of positive upper density. For example, the odd numbers do not contain this pattern.  But if we take a random subset of \( \mathbb{N} \) by picking natural numbers with  probability \( \frac{1}{2} \) independently, then  this  set  contains the Schur pattern with probability \( 1 \).

\noindent There is a deterministically defined analog of a random set - a normal set.  To define a normal set we recall the notions of a normal infinite binary sequence and of a normal number.

\noindent  \textit{An infinite \( \{0,1\}\)-valued sequence \( \lambda \) is called a
\textbf{normal sequence} if
every finite binary word \( w \) occurs in \( \lambda \) with  frequency \(\frac{1}{2^{|w|}}\), where \( |w| \)
is the length of \( w \).} 

\noindent
The more familiar notion is that of a normal number \( x \in [0,1]\).
 If to a number  \( x \in [0,1]\) we associate its dyadic expansion  \( x = \sum_{i=1}^{\infty} \frac{x_i}{2^i} \, \) with \(x_i \in \{0,1\}\), then \( x \) is called a normal number if
  the sequence \( (x_1,x_2,\ldots,x_n,\ldots) \) is a normal sequence.
% To a sequence
%\( \lambda=(\lambda_1,\lambda_2,\ldots,\lambda_n,\ldots) \in \{0,1\}^{\mathbb{N}} \) we associate the set
%\( B_{\lambda} \subset \mathbb{N} \) by the  rule: \( i \in B_{\lambda} \leftrightarrow \lambda_i=1 \). We
%define the notion of a normal set.

\begin{definition}
\textit{A set \( S \subset \mathbb{N}\) is called \textbf{normal} if the 0-1 sequence \( 1_S\) \textnormal{(\(1_S(n) = 1 \Leftrightarrow n \in S \))} is normal. }
 \end{definition}
%\textit{For every partitioning of \(\mathbb{N}\) into a finite number of sets \( C_1, \ldots,C_r \) at least
%  one of the subsets contains \( x,y,z \) such that \( x+y=z\).}

%\noindent It is obvious that  positivity of density for a subset \( S \) is not enough to
%ensure existence of a
%"Schur pattern" (e.g. \( S =\)odd numbers). In the context of van der Waerden and Schur theorems it will
%be appropriate to recall that there is a common generalization of them,
%Rado's theorem, which is a complete
%characterization of all linear patterns regular for finite partitions. By the word linear
%we mean that all
%equations in the diophantine system corresponding to the pattern are linear.

\noindent Normal sets exhibit a non-periodic, ``random" behavior.
 We notice that if \( S \) is a normal set then \( S - S \) contains \( \mathbb{N} \).
  Therefore, the equation
 \[ z-y=x \] 
 is solvable within every normal set. This implies that
 every normal set
 contains Schur patterns.

\noindent  Normal sets are related to a class of dynamical systems displaying maximal randomness; namely Bernoulli systems. 
In this work we investigate occurrence of linear patterns in sets corresponding to dynamical systems with a lower degree of randomness, so called \textit{weakly mixing} dynamical systems.   
The sets we obtain will be called WM sets. We will make this precise in the next section. 

\noindent In the present paper we treat the following problem:

\noindent
\textit{Give a complete characterization of the linear algebraic patterns which occur in all WM sets.}

\noindent 
\begin{remark}
It will follow from our definition of a WM set, that any normal set is a WM set.
\end{remark}

The problem of the solvability of a nonlinear equation or  system of equations is beyond the limits of the technique used in this paper.  Nevertheless, some particular equations might be analyzed. In \cite{fish1} it is shown that there exist normal sets in which the multiplicative Schur equation \( xy = z \) is not solvable.

\subsection{Generic points and WM sets}\label{intro} For a formal definition of  WM sets 
 we need the notions of
measure preserving systems and of  generic points.

\begin{definition}
Let \( X \) be a compact metric space, \( \mathbb{B} \)  the Borel
\( \sigma\)-algebra on \( X \); let \( T:X \rightarrow X \) be a
continuous map and \( \mu \) a probability measure on \(
\mathbb{B} \). The quadruple \( (X,\mathbb{B},\mu,T) \) is called a
\textbf{measure preserving system} if for every \( B \in
\mathbb{B} \) we have \( \mu(T^{-1}B) = \mu(B) \).
\end{definition}

\noindent
 For a compact metric space \( X \) we denote by \( C(X)
\) the space of continuous functions on \( X \) with the uniform
norm.
\begin{definition} Let \( (X,\mathbb {B},\mu ,T) \) be a measure
preserving system. A point \( \xi \in X \) is called
\textbf{generic} for the system \( (X,\mathbb {B},\mu ,T) \) if for any \( f\in C(X) \) we have
\begin{equation}
\label{limit} \lim _{N\rightarrow \infty }\frac{1}{N}\sum
^{N-1}_{n=0}f(T^{n}\xi )=\int _{X}f(x)d\mu (x).
\end{equation}
\end{definition}

\noindent
\textbf{Example:}
Consider the \textit{Bernoulli system}: \( (X = \{0,1\}^{\mathbb{N}_0}, \mathbb{B}, \mu, T) \), where \( X \)  is endowed with the Tychonoff topology, \( \mathbb{B} \) is Borel \( \sigma\)-algebra on \( X \), \( T \) is the shift to the left, \( \mu \) is the product measure of \( \mu_i\)'s where \( \mu_i(0)=\mu_i(1) = \frac{1}{2} \)
and \( \N_0 = \N \cup \{0\}\).
An alternative definition of a normal
set which is purely dynamical is the following.

 A set \( S \) is normal if and only if 
the sequence \( 1_S \in \{0,1\}^{\mathbb{N}_0} \) is a generic
point of the foregoing Bernoulli \( \left( \frac{1}{2}, \frac{1}{2} \right) \) system. 
\vspace{0.1in}

\noindent   The notion of a WM set generalizes that of a normal set, where the role
played by  Bernoulli dynamical system is taken over by dynamical systems  of more general
 character.
 \vspace{0.1in}
 
\noindent
Let \( \xi (n) \) be any \( \{0,1\}- \)valued sequence. There is a
natural dynamical system \( (X_{\xi },T) \) connected to the
sequence \( \xi \):

\noindent On the  compact space \( \Omega
=\{0,1\}^{\mathbb {N}_0} \)  endowed with the Tychonoff topology,
we define a continuous map \( T:\Omega \longrightarrow \Omega  \)
by  \(
(T\omega )_{n}=\omega _{n+1} \). Now for any \( \xi \) in \(
\Omega \) we define 
\[ X_{\xi } = \overline{\{T^{n}\xi \}_{n\in \mathbb {N}_0}} \subset \Omega.
\]
 %Let us suppose there
%exists an invariant measure \( \mu \) on \( (X_{\xi },\mathbb
%{B},T) \), where \( \mathbb {B} \) is Borel sigma algebra on \(
%X_{\xi } \), such that \( \xi  \) is generic point of the system.

\noindent Let \( A \) be a subset of \( \mathbb {N} \). Choose \(
\xi =1_{A} \) and assume that for an appropriate measure \( \mu \),
the point \( \xi \) is generic for \( (X_{\xi },\mathbb {B},\mu
,T) \). We can attach to the set \( A \) dynamical properties
associated with the system \( (X_{\xi },\mathbb {B},\mu ,T) \).

%For example, \( A \) is called \textit{weakly mixing}  if the measure preserving system \( (X_{\xi },\mathbb
%{B},\mu ,T) \) is weakly mixing and \( A \) is called \textit{totally ergodic}  if the measure preserving system \( (X_{\xi },\mathbb
%{B},\mu ,T) \) is  totally ergodic.
%\newline

\noindent
We recall the notions of ergodicity, total ergodicity and weak-mixing in ergodic theory:
\begin{definition}
A measure preserving system \( (X,\mathbb{B},\mu,T) \) is called
\textbf{ergodic} if every \( A \in \mathbb{B} \) which is
invariant under \(T\), i.e. \( T^{-1}(A) = A \), satisfies \(
\mu(A) = 0 \) or \(1 \).
\newline
A measure preserving system \( (X,\mathbb{B},\mu,T) \)  is called
\textbf{totally ergodic} if for every \( n \in \mathbb{N} \) the
system \( (X,\mathbb{B},\mu,T^n) \) is ergodic.
\newline
A measure preserving system \( (X,\mathbb{B},\mu,T) \) is called
\textbf{weakly mixing} if the system \( (X\times X,\mathbb{B}_{X
\times X},\mu \times \mu,T \times T) \) is ergodic.
\end{definition}
%Throughout the whole work we use the notion of an "algebraic
%pattern" which means a set of solutions of a single diophantine
%equation or a system of diophantine equations.
%\noindent Let \(\mathcal{P}\) denote  some dynamical property of a measure preserving system.
%We can attach the property \( \mathcal{P} \) to a subset of the natural numbers by the following:
%\begin{definition}
%A subset \( S \subset \mathbb{N} \) is \( \mathcal{P} \) if and only if  \( 1_S \) is generic for
%the measure preserving system \( (X_{1_S},\mathbb{B},\mu,T)\) which has property \( \mathcal{P}\).
%\end{definition}

\noindent 
In our discussion of WM sets corresponding to weakly mixing systems, we shall add the proviso that the dynamical system in question not be the trivial \(1\)-point system supported on the point \( x \equiv 0 \). This implies that the ``density" of the set in question be positive.

%Finally,
%we would like to deal with subsets of \( \mathbb{N} \) which
%may have a rich structure, i.e. may be expected to exhibit many algebraic
%patterns. Therefore, we restrict ourselves to the case of weakly
%mixing subsets of \( \mathbb{N}\) of positive density. The density
%of every weakly mixing set exists, because 
%it can be represented as a limit of an average  of a continuous function along the orbit of the indicator function of a set. For completeness we define
%the density of a subset of \( \mathbb{N} \).
\begin{definition}
Let \( S \subset \mathbb{N} \). If the limit of \( \frac{1}{N} \sum_{n=1}^{N} 1_S(n) \) exists
 as \( N \rightarrow \infty \) we call it the \textbf{density} of \( S \) and denote by \( d(S) \).
\end{definition}

\begin{definition}
A subset \( S \subset \mathbb{N} \) is called a \textbf{WM set} if
\( 1_S \) is a generic point of the weakly mixing
system \( (X_{1_S},\mathbb{B},\mu ,T) \) and \( d(S) > 0 \).
\end{definition}

%\noindent We could equally well speak of strongly mixing sets, but
%for our purposes, weak mixing will be adequate.

%%%%%%%%%%%%%%%%%%%%%%%%%%%%%%%%%%%%%%%%%%%%%%
%%%%%%%%%%%%%%%%%%%%%%%%%%%%%%%%%%%%%%%%%%%%%%%
%%%%%%%%%%%%%%%%%%%%%%%%%%%%%%%%%%%%%%%%%%%%%%%%
\subsection{Solvability of linear diophantine systems within WM
sets and normal sets}\label{sub_sect_linear_dioph_syst}

 \noindent Our main result is  a complete
characterization of  linear systems of diophantine equations
which are solvable within every WM set. The characterization is given by describing affine subspaces of $\mathbb{Q}^k$ which intersect  $A^k$, for any WM set $A \subset \mathbb{N}$.  
\begin{theorem}
\label{main_thm_lin}
An affine subspace of $\mathbb{Q}^k$ intersects $A^k$ for every WM set $A \subset \mathbb{N}$
if and only if
it contains a set of the form 
\[
\{ n\vec{a} + m\vec{b} + \vec{f} \, | \, n,m \in \mathbb{N} \},
\]
where \( \vec{a}, \vec{b}, \vec{f} \) have the following description:
\newline
 \( \vec{a} = (a_1,a_2,\ldots,a_k)^t \),
\(\vec{b} = (b_1,b_2,\ldots,b_k)^t \in \mathbb{N}^k\), \( \vec{f} = (f_1,f_2,\ldots,f_k)^t \in \mathbb{Z}^k \) and there exists a partition 
  \( F_1,\ldots,F_l \) of \( \{1,2,\ldots,k\}\) such that:
\newline
a\(\closerm\) for every \( r \in \{1,\ldots,l\}\) there exist \( c_{1,r},c_{2,r} \in \mathbb{N} \), such that
for every \( i \in F_r \) we have \( a_i = c_{1,r} \, , \, b_i = c_{2,r} \)
and for every \(j \in \{1,\ldots,k\} \setminus F_r \) we have
\[
\det \left( \begin{array}{cc}
                    a_j & b_j \\
                    c_{1,r} & c_{2,r}  \\
            \end{array}
    \right) \neq 0.
\]
\newline
b\(\closerm\) 
\[ \forall r \in \{1,2,\ldots,l\} \, \exists c_r \in \Z \,\, such \,\, that \,\, \forall i \in F_r \, : \, f_i = c_r. \]
\end{theorem}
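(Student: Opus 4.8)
The plan is to split the proof into its two implications, and in each direction to separate the ``patterns'' result (which generalizes Schur-type solvability) from the ``rigidity'' result (which rules out everything else).

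\textbf{Sufficiency.} Suppose an affine subspace $L \subset \mathbb{Q}^k$ contains a set $\{n\vec a + m\vec b + \vec f \mid n,m\in\mathbb N\}$ with $\vec a,\vec b,\vec f$ as described. Fix a WM set $A$. I want $n,m\in\mathbb N$ with $n\vec a + m\vec b + \vec f \in A^k$. Grouping coordinates by the blocks $F_r$, condition a) says that on block $F_r$ all coordinates of $\vec a$ equal $c_{1,r}$ and of $\vec b$ equal $c_{2,r}$, and condition b) says all coordinates of $\vec f$ equal $c_r$; so on block $F_r$ the vector $n\vec a+m\vec b+\vec f$ is constant, equal to $n c_{1,r} + m c_{2,r} + c_r$. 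Hence it suffices to find $n,m$ such that for each $r$ the integer $n c_{1,r} + m c_{2,r} + c_r$ lies in $A$. This is a finite system of two-variable linear forms, pairwise ``independent'' in the sense of the nonvanishing determinant in a), and the key tool is the weak-mixing/genericity hypothesis on $1_A$: I would pass to the system $(X_{1_A},\mathbb B,\mu,T)$, let $P = \{x : x_0 = 1\}$ (the cylinder), note $\mu(P) = d(A) > 0$, and show that for a weakly mixing system the set of $(n,m)$ with $T^{nc_{1,r}+mc_{2,r}+c_r}\xi \in P$ for all $r$ simultaneously has positive density in $\mathbb N^2$; this follows from the multiple-recurrence / joint-ergodicity properties of weakly mixing systems applied to the distinct ``directions'' $c_{1,r}t_1 + c_{2,r}t_2$ (the distinctness being exactly what a) guarantees). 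In particular such $(n,m)$ exists, so $L\cap A^k\neq\emptyset$.

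\textbf{Necessity.} Conversely, suppose $L$ intersects $A^k$ for every WM set $A$. First I would show $L$ must contain a set of the affine two-parameter form at all: I would build, for each candidate ``shape'' of $L$, a single WM set (or a family) that witnesses failure if $L$ has no such subset. The construction of WM sets is the technical heart here: starting from a weakly mixing (e.g.\ suitably chosen rank-one or $[T,T^{-1}]$-type, or more simply an irrational-rotation-driven skew) system with a set of positive measure whose return-time structure avoids prescribed linear relations, one produces $A$ with $1_A$ generic and $d(A)>0$ but $A^k$ missing the troublesome affine subspaces. Concretely: if $L$ genuinely depends on a coordinate $x_j$ in a way not captured by a).--b)., one uses the freedom in weakly mixing systems (which, unlike Bernoulli systems, need \emph{not} contain all finite patterns) to force the coordinates indexed outside a common block to be ``generic and unrelated,'' making the linear constraint defining $L$ unsatisfiable on $A^k$. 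Then, given that $L$ does contain some $\{n\vec a + m\vec b + \vec f\}$, I would derive the partition $F_1,\dots,F_l$ and conditions a), b) by a combinatorial/linear-algebra argument: two indices $i,i'$ lie in the same block iff the rows $(a_i,b_i)$ and $(a_{i'},b_{i'})$ are proportional, and then testing $L$ against carefully chosen WM sets (including normal sets, where every pattern with a nonsingular determinant appears but ``diagonal'' relations across non-proportional rows can be destroyed by moving to a weakly mixing non-Bernoulli model) forces $a_i,b_i$ to be literally equal within a block and forces the determinant condition across blocks, and similarly forces $\vec f$ to be block-constant.

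\textbf{Main obstacle.} I expect the hard part to be the necessity direction, specifically the \emph{construction of the witnessing WM sets}: one needs, for each affine subspace violating the conditions, an explicit weakly mixing system together with a generic point whose orbit structure provably avoids the corresponding linear pattern, and verifying genericity and $d(A)>0$ while simultaneously controlling which linear equations are \emph{not} solved is delicate. The sufficiency direction should be comparatively routine given the joint-ergodicity machinery for weakly mixing systems (Furstenberg-type results), the only care being to handle the two-parameter family $(n,m)\in\mathbb N^2$ and the finitely many pairwise-independent directions at once; I would isolate that as a lemma stating that for a weakly mixing $(X,\mathbb B,\mu,T)$ and pairwise non-proportional integer vectors $(c_{1,r},c_{2,r})$, the averages $\frac1{N^2}\sum_{n,m<N}\prod_r 1_P(T^{c_{1,r}n + c_{2,r}m + c_r}x)$ converge to $\prod_r\mu(P) > 0$ for $\mu$-a.e.\ $x$, hence for the generic point $\xi$.
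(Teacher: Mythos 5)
Your high-level architecture does match the paper's (sufficiency via multiple averages along the two-parameter family $(n,m)$; necessity via witnessing sets plus linear algebra), but both halves have genuine gaps at exactly the points where the real work lies.

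In the sufficiency direction, the step ``the averages converge to $\prod_r\mu(P)$ for $\mu$-a.e.\ $x$, hence for the generic point $\xi$'' does not work: an almost-everywhere statement says nothing about the particular point $\xi=1_A$, and genericity only controls Birkhoff averages of a single continuous function, not multiple averages. Moreover, pointwise a.e.\ convergence of such two-parameter multiple averages is not an off-the-shelf result; what is available (Bergelson's weakly mixing PET, Bergelson--McCutcheon, Host--Kra) is $L^2$-convergence. Bridging the gap between the measure-theoretic statement and the specific sequence $1_A$ is the technical heart of the paper's Section 2: a finitary van der Corput lemma reduces the two-parameter average to autocorrelation functions $\psi^j_{r,\vec i}$ of $\xi$, which are orbit evaluations of continuous functions on $X_\xi$, so that genericity converts the relevant squared averages into integrals over $X_\xi$, and the multiparameter weakly mixing PET applied on $X_\xi\times X_\xi$ shows those integrals are small for a density-one set of shift vectors $\vec i$. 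Without some such device your argument does not close. (Your reduction to one linear form per block is fine, but it is a restatement of condition b), which forces the forms $a_in+b_im+f_i$ with $i$ in one block to coincide, not a consequence of it.)

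In the necessity direction, the witnesses the paper uses are not exotic weakly mixing systems but \emph{normal} sets: Proposition 3.1 constructs, for any $a\neq b$ and any $c$, a normal set in which $ax=by+c$ has no solutions, by partitioning $\mathbb N$ into levels $B_i$ along the ``chain'' structure of the equation, making a random choice on ancestors that propagates deterministically to descendants, and verifying normality almost surely by a second-moment estimate. A skew product over an irrational rotation is not weakly mixing (it has a Kronecker factor), so that route cannot produce WM sets. You are also missing the step that produces the block structure in the first place: before declaring that $i,i'$ lie in the same block iff $(a_i,b_i)$ and $(a_{i'},b_{i'})$ are proportional, one must show the subspace contains suitable $\vec a,\vec b$ at all. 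The paper does this with a cone lemma (Lemma 3.1): if every pair of vectors in the positive cone $W$ of the shifted subspace has a vanishing $2\times2$ minor, then some coordinate-pair projection of $W$ is at most one-dimensional, since otherwise $W$ would be covered by finitely many volume-zero subspaces of $\mathrm{Span}\,W$. That lemma, combined with the normal-set counterexamples for off-diagonal one-dimensional projections and with WM sets avoiding $x-y=c$ for the shifts, is what actually forces conditions a) and b).
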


\noindent 
We also classify all affine subspaces of $\mathbb{Q}^k$ which intersect $A^k$ for any normal set $A \subset \mathbb{N}$.
\begin{theorem}
\label{normal_theorem}
An affine subspace of $\mathbb{Q}^k$ intersects $A^k$ for every normal set $A \subset \mathbb{N}$ if and only if
it contains a set of the form 
\[
\{ n\vec{a} + m\vec{b} + \vec{f} \, | \, n,m \in \mathbb{N} \},
\]
where \( \vec{a}, \vec{b}, \vec{f} \) have the following description:
\newline
 \( \vec{a} = (a_1,a_2,\ldots,a_k)^t \),
\(\vec{b} = (b_1,b_2,\ldots,b_k)^t \in \mathbb{N}^k\), \( \vec{f} = (f_1,f_2,\ldots,f_k)^t \in \mathbb{Z}^k \) and there exists a partition 
  \( F_1,\ldots,F_l \) of \( \{1,2,\ldots,k\}\) such that for every \( r \in \{1,\ldots,l\}\) there exist \( c_{1,r},c_{2,r} \in \mathbb{N} \), such that
for every \( i \in F_r \) we have \( a_i = c_{1,r} \, , \, b_i = c_{2,r} \)
and for every \(j \in \{1,\ldots,k\} \setminus F_r \) we have
\[
\det \left( \begin{array}{cc}
                    a_j & b_j \\
                    c_{1,r} & c_{2,r}  \\
            \end{array}
    \right) \neq 0.
\]
\end{theorem}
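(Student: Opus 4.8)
The statement is the exact analogue of Theorem~\ref{main_thm_lin} with condition b\closerm{} (constancy of $\vec f$ on each block $F_r$) deleted; morally, a normal set is ``random enough'' that this constancy is no longer needed. In both directions one reduces at once to the explicit two--parameter family
\[
\mathcal P\;=\;\{\,n\vec a+m\vec b+\vec f\;:\;n,m\in\N\,\},
\]
whose $i$--th coordinate, for $i\in F_r$, equals $c_{1,r}n+c_{2,r}m+f_i$; so on the block $F_r$ the linear part is the fixed vector $\vec v_r:=(c_{1,r},c_{2,r})\in\N^2$ and only the integer shift $f_i$ varies among the coordinates, while condition a\closerm{} (applied to some $j\in F_{r'}$ with $r'\ne r$ in its determinant clause) forces $\vec v_1,\dots,\vec v_l$ to be pairwise non--proportional.

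\emph{The ``if'' direction.} Since the affine subspace contains $\mathcal P$, it suffices to show that every normal set $A$ contains, for some $(n,m)\in\N^2$, all the numbers $c_{1,r}n+c_{2,r}m+f_i$ ($r\le l$, $i\in F_r$). Putting $D_r:=\{f_i:i\in F_r\}$, this is the following \emph{Key Lemma}: if $A$ is normal, $\vec v_1,\dots,\vec v_l\in\N^2$ are pairwise non--proportional and $D_1,\dots,D_l\subset\Z$ are finite, then $\{(n,m)\in\N^2:\ c_{1,r}n+c_{2,r}m+d\in A\ \text{for all }r\text{ and all }d\in D_r\}$ is non--empty (indeed of positive lower density). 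I would prove it from the fact that $1_A$ is a generic point of the Bernoulli $(\tfrac12,\tfrac12)$ system, using the strong mixing of that system (it is weakly mixing, in fact mixing of all orders). Starting from the averages $\frac{1}{N^2}\sum_{(n,m)\in\{1,\dots,N\}^2}\prod_r\prod_{d\in D_r}1_A(c_{1,r}n+c_{2,r}m+d)$, a van der Corput / PET--type argument in the two variables $(n,m)$, in which the cross--block terms are controlled by weak mixing precisely because the directions $\vec v_r$ are distinct, reduces the problem in finitely many steps to the positivity of single--block averages $\frac{1}{|\{1,\dots,N\}\cap R_r|}\sum_{u\in\{1,\dots,N\}\cap R_r}\prod_{d\in D_r}1_A(u+d)$, where $R_r=\{c_{1,r}n+c_{2,r}m:n,m\in\N\}$ is a cofinite subset of $g_r\Z$ with $g_r=\gcd(c_{1,r},c_{2,r})$. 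The remaining — and only genuinely delicate — point is that each such average stays bounded below by a positive constant even though $1_A$ need not be normal along the progression $g_r\Z$: if this bound failed, the sparse word $W_{D_r}$ coding the shifts $D_r$ would occur in $1_A$ only at positions missing a whole residue class mod $g_r$; but choosing a prime $M>\max(g_r,\ \max D_r-\min D_r)$ and superposing $g_r$ copies of $W_{D_r}$ spaced $M$ apart yields a \emph{consistent} finite word, hence (by normality) a word that does occur in $1_A$, and each of its occurrences contains an occurrence of $W_{D_r}$ in every residue class mod $g_r$ — a contradiction.

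\emph{The ``only if'' direction.} Arguing contrapositively, suppose the affine subspace contains no such $\mathcal P$, i.e. a\closerm{} cannot be satisfied; we must exhibit a \emph{normal} set $A$ with $A^k$ disjoint from the subspace. Such a subspace a fortiori fails a\closerm{}$\,\wedge\,$b\closerm{}, so by Theorem~\ref{main_thm_lin} it is not WM--universal, and the task is to verify that the WM set produced there can be taken generic for a Bernoulli system. The reason this is possible is that a failure of a\closerm{} is a ``non--degeneracy / congruence'' obstruction which survives in a full--entropy (Bernoulli) system — concretely, as with $V=\{y=2x\}$, where $A=\{n:\ \nu_2(n)\bmod 2=\epsilon(n/2^{\nu_2(n)})\}$ for an i.i.d.\ fair choice of $\epsilon$ on the odd numbers is normal and yet contains no pair $\{x,2x\}$ — whereas a failure of b\closerm{} alone (as for $V=\{y=x+1\}$) genuinely requires a lower--entropy weakly mixing set, such as one with no two consecutive elements, which cannot be normal. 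Rereading the construction behind Theorem~\ref{main_thm_lin} and choosing its underlying system to be Bernoulli therefore yields the required normal counterexample.

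The hard step, as I see it, is the Key Lemma, and inside it the failure of normality to pass to arithmetic subprogressions: the single--block averages cannot be evaluated by an equidistribution statement and must instead be bounded below by the combinatorial superposition argument above, and making that cohere with the two--variable van der Corput induction that disentangles the different blocks is the main technical difficulty.
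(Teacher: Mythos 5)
Your overall strategy is sound and your diagnosis of why condition b\closerm{} disappears for normal sets (congruence obstructions survive at full entropy, shift obstructions do not) is exactly right, but the ``if'' direction as written has a gap and misses the device the paper uses to close it. Your Key Lemma asks for positivity of averages of $\prod_r\prod_{d\in D_r}1_A(c_{1,r}n+c_{2,r}m+d)$, and you propose to ``disentangle the blocks'' by a two--variable van der Corput/PET argument in which cross--block terms are killed by weak mixing. The difficulty is that within a block the factors share the same direction vector $(c_{1,r},c_{2,r})$ and differ only by the shifts $d$, so the pairwise determinant hypothesis of Proposition \ref{main_prop} fails for those pairs and the within--block correlations do not vanish --- they are precisely what survives as your ``single--block averages.'' You assert that the induction factors the average into a product of single--block averages but give no mechanism for it. The paper's mechanism is to collapse each block \emph{before} doing any analysis: set $c_r=\min_{i\in F_r}f_i$, $S_r=\{f_i-c_r \, : \, i\in F_r,\ f_i>c_r\}$ and $A_r=\{n \, : \, n\in A \text{ and } n+s\in A\ \forall s\in S_r\}$; then $\prod_{i\in F_r}1_A(c_{1,r}n+c_{2,r}m+f_i)=1_{A_r}(c_{1,r}n+c_{2,r}m+c_r)$, and --- this is where normality is actually used --- each $A_r$ is a WM set of positive density. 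Now there is exactly one factor per block, the determinant condition holds for all remaining pairs, and Proposition \ref{main_prop} applies verbatim to give an average close to $\prod_r d(A_r)>0$. No new ergodic or combinatorial input is needed. (Your superposition argument for positivity of single--block averages along residue classes mod $g_r$ is correct as far as it goes, but it is also unnecessary: Proposition \ref{wm_prop}, via disjointness from Kronecker systems, already identifies averages along $a\N$ with averages along $\N$ for WM sequences.)

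The ``only if'' direction of your proposal coincides in substance with the paper's: the counterexamples needed to force condition a\closerm{} are exactly the normal sets of Proposition \ref{3_constr_prop} (normal sets avoiding $ax=by+c$ with $a\neq b$), while only the derivation of condition b\closerm{} requires the non--normal WM sets avoiding $x-y=c$; dropping that step yields the stated necessity for normal sets, which is how the paper records it in the remark following the proof of Theorem \ref{main_thm_lin}. I would only caution that ``choosing the underlying system of the construction to be Bernoulli'' is not an operation one performs on the WM construction after the fact --- Proposition \ref{3_constr_prop} already outputs normal sets, so the correct statement is simply that every counterexample invoked in establishing a\closerm{} is itself normal.
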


\noindent A family of linear algebraic patterns that has been studied previously are the ``partition regular" patterns. These are patterns which  for any finite partition of  \( \N \): \( \N = C_1 \cup C_2 \cup \ldots \cup C_r \), the pattern necessarily occurs in some \( C_j \). (For example by van der Waerden's theorem, arithmetic progressions are partition regular and by Schur's theorem the Schur pattern is also partition regular).  A theorem of Rado gives a complete characterization of such patterns. We will show 
 in Proposition \ref{Rado_analog}
 that every linear algebraic pattern which is
partition-regular  
occurs in every WM set. 
%In the context of Rado patterns and WM sets it is
%important to recall that by Furstenberg's theorem (see \cite{furst3}) every central set  contains all
%Rado patterns. It is shown in \cite{fish4} that there are WM sets which don't contain a central set. Therefore we can't prove that every WM set contains all
%Rado patterns by use of Furstenberg's theorem. 

\noindent It is important to mention that if we weaken the requirement of weak mixing to total ergodicity, then in the resulting family of sets, Rado's patterns need not necessarily occur.  For example, for 
\( \alpha \not \in \mathbb{Q} \) 
the set
\[
S = \left\{ n \in \mathbb{N}| \,  n\alpha\,  \, (\!\!\!\!\!\mod 1) \in \left[\frac{1}{3},\frac{7}{12}\right] \right\}
\] 
is \textit{totally ergodic}, i.e., \( 1_S \) is a generic point for a totally ergodic system and the density of \( S \) is positive,  but the equation \(x+y=z\) is not solvable within \( S \). 

\noindent In the  separate paper \cite{Fish_sumset_phenomenon} we will address the question of solvability of more general algebraic patterns, not necessarily linear, in totally ergodic and WM sets.

\noindent The structure of the paper is the following. In Section \ref{lin_proof_suff} we prove the direction ``\( \Leftarrow \)" of Theorems \ref{main_thm_lin} and \ref{normal_theorem}. In Section  \ref{lin_useful_constructions}, by use of a probabilistic method, we prove the direction ``\( \Rightarrow\)" of Theorems \ref{main_thm_lin} and \ref{normal_theorem}.  
In Section  \ref{sub_sect_Rado} we show that every linear system which is solvable in one of the cells of any finite partition of \( \N \) is also solvable  within every WM set. The paper ends with Appendix in which we collected proofs of technical statements which have been used in Sections \ref{lin_proof_suff} and \ref{lin_useful_constructions}.
\subsection{Acknowledgments}

\noindent This paper is a part of the author's Ph.D. thesis. I thank my advisor Prof. Hillel Furstenberg for introducing me to ergodic theory and for many useful ideas which I learned from him. I thank Prof. Vitaly Bergelson for fruitful  discussions and valuable suggestions. Also, I would like to thank an anonymous referee for numerous valuable remarks.

\numberwithin{lemma}{section} \numberwithin{theorem}{section}
\numberwithin{prop}{section} \numberwithin{remark}{section}
\numberwithin{definition}{section} \numberwithin{corollary}{section}

\section{Proof of Sufficiency}\label{lin_proof_suff}

\noindent
\textbf{Notation:} \textit{We introduce the scalar product of two vectors \( v,w \) of 
length \( N \) as follows:
\[
<v,w>_N \doteq \frac{1}{N} \sum_{n=1}^N v(n)w(n).
\]
We denote by \( L^2(N) \) the (finite-dimensional) Hilbert space of all real vectors of
 length \( N \) with the aforementioned scalar product.
\newline
  We define: \(
\parallel{w}\parallel_N^2 \doteq <w,w>_N \).}

\noindent First we state the following proposition which will prove useful 
in the proof of the sufficiency of the conditions of  Theorem \ref{main_thm_lin}.
\begin{prop}
\label{main_prop}
 Let \( A_i \subset \mathbb{N} \) \( \rm( \)\( 1 \leq i \leq k \)\( \rm) \) be  WM sets. Let \newline
 \(\xi_i(n) \doteq 1_{A_i}(n) - \dd(A_i) \), where \( \dd(A_i) \) denotes
density of \( A_i \). Suppose there are  \( (a_1,b_1),(a_2,b_2),
\ldots, (a_k,b_k) \in \mathbb{Z}^2 \), such
that \( a_i
> 0, \, 1 \leq i \leq k \), and for every \( i \neq j \)
\begin{displaymath}
\det \left( \begin{array}{cc}
a_i & b_i \\
a_j & b_j  \\
\end{array}
\right) \neq 0.
\end{displaymath}
 Then for every \( \varepsilon > 0 \) there exists \( M(\varepsilon)
 \in \mathbb{N}
 \), such that for every \( M \geq M(\varepsilon) \) there exists
 \( N(M,\varepsilon) \in \mathbb{N} \), such that for every \( N \geq
 N(M,\varepsilon)\)
 \begin{displaymath}
 \left\Vert{w}\right\Vert_N < \varepsilon,
 \end{displaymath}
 where \( w(n) \doteq \frac{1}{M} \sum_{m=1}^M \xi_1(a_1n + b_1 m) \xi_2(a_2 n + b_2 m)
 \ldots \xi_k(a_k n + b_k m) \) for every  \( n=1,2,\ldots,N\).
\end{prop}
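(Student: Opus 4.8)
The plan is to expand $\|w\|_N^2$, recognise it as a positive quadratic form in the $M$ vectors $v_m:=\big(\prod_{i=1}^k\xi_i(a_in+b_im)\big)_{n=1}^N\in L^2(N)$, pass to the limit in $N$, and then annihilate the resulting quantity by letting $M\to\infty$ via a van der Corput estimate and weak mixing. Expanding the square gives
\[
\|w\|_N^2=\Big\langle\tfrac1M\sum_{m=1}^M v_m,\ \tfrac1M\sum_{m'=1}^M v_{m'}\Big\rangle_N=\frac1{M^2}\sum_{m,m'=1}^M\langle v_m,v_{m'}\rangle_N,\qquad \langle v_m,v_{m'}\rangle_N=\frac1N\sum_{n=1}^N\prod_{i=1}^k\xi_i(a_in+b_im)\,\xi_i(a_in+b_im').
\]
Since $A_i$ is a WM set, $\xi_i(n)=g_i(T^n1_{A_i})$ with $g_i:=1_{\{\omega_0=1\}}-d(A_i)\in C(X_{1_{A_i}})$ and $\int g_i\,d\mu_i=0$ (genericity forces $d(A_i)=\mu_i(\{\omega_0=1\})$); and since a weakly mixing system is totally ergodic, $1_{A_i}$ is also generic for $(X_{1_{A_i}},\mu_i,T^{a_i})$ (a standard consequence of total ergodicity, cf.\ the Appendix). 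So each $\langle v_m,v_{m'}\rangle_N$ is a \emph{multiple ergodic average} of continuous functions along the orbit of the generic tuple $(1_{A_i})_i$. The analytic input from the Appendix is that these averages converge as $N\to\infty$, to a number $I(m,m')$. Because every Gram matrix $\big(\langle v_m,v_{m'}\rangle_N\big)_{m,m'}$ is positive semidefinite, so is $\big(I(m,m')\big)_{m,m'}$; hence $I(m,m')=\langle\Theta_m,\Theta_{m'}\rangle_{\mathcal H}$ for vectors $\Theta_m$ in some Hilbert space $\mathcal H$, and $\lim_{N\to\infty}\|w\|_N^2=\frac1{M^2}\sum_{m,m'}I(m,m')=\big\|\tfrac1M\sum_{m=1}^M\Theta_m\big\|_{\mathcal H}^2=:c(M)\ge0$.

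Next I would show $c(M)\to0$ as $M\to\infty$. The diagonal contributes $\frac1{M^2}\sum_m I(m,m)\le C/M$ since $|\xi_i|\le1$. For the rest, apply van der Corput to the $\mathcal H$-valued sequence $m\mapsto\Theta_m$: $\limsup_M c(M)\le \frac{C}{H+1}+\frac2{H+1}\sum_{h=1}^H\limsup_M\big|\tfrac1M\sum_m\langle\Theta_{m+h},\Theta_m\rangle\big|$. Now $\langle\Theta_{m+h},\Theta_m\rangle=I(m+h,m)=\lim_N\frac1N\sum_n\prod_i\xi_i(a_in+b_i(m+h))\xi_i(a_in+b_im)$, and a second appeal to weak mixing — to the density–zero of the autocorrelations $d\mapsto\langle g_i,U_{T^{b_i}}^{\,d}g_i\rangle$ of the constituent systems ($U_T$ being the Koopman operator; $T^{b_i}$ is weakly mixing whenever $b_i\ne0$, and by the determinant hypothesis at most one $b_i$ can vanish, the borderline case $k=1,\ b_1=0$, in which $w(n)=\xi_1(a_1n)$, being the sole exception) — gives, via the Appendix, that $\frac1H\sum_{h=1}^H\limsup_M\big|\tfrac1M\sum_m\langle\Theta_{m+h},\Theta_m\rangle\big|\to0$ as $H\to\infty$. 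Letting $M\to\infty$ and then $H\to\infty$ yields $c(M)\to0$.

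Finally the quantifiers fall out: given $\varepsilon>0$, choose $M(\varepsilon)$ with $c(M)<\varepsilon^2$ for all $M\ge M(\varepsilon)$; for each such $M$, since $\|w\|_N^2\to c(M)$ as $N\to\infty$, choose $N(M,\varepsilon)$ with $\|w\|_N^2<\varepsilon^2$, i.e.\ $\|w\|_N<\varepsilon$, for all $N\ge N(M,\varepsilon)$ — which is precisely the assertion.

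The main obstacle is the input from the Appendix: the behaviour of multiple ergodic averages $\frac1N\sum_n\prod_i\xi_i(a_in+b_im)\xi_i(a_in+b_im')$ taken along the orbit of a generic point — their convergence in $N$, and the decoupling that forces the $M$-average to vanish. This is genuinely delicate, because a generic point of a weakly mixing system need not be generic for the product measure in a product system, not even along distinct speeds: for a suitably constructed normal set $A$ the orbit of $(1_A,1_A)$ under $T\times T^2$ equidistributes for a graph joining rather than for $\mu\times\mu$. One therefore cannot simply embed the problem in a product system and invoke Furstenberg's weak mixing of all orders; the cancellation must instead be obtained by a van der Corput / PET-style induction carried out directly at the level of the generic point, using at each stage that Cesàro averages of continuous functions along that orbit converge to the corresponding integrals, and exploiting that the $2k$ affine-linear forms $a_in+b_im$, $a_in+b_im'$ are pairwise distinct in the variables $(n,m,m')$ once $b_i\ne0$. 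The remaining steps above — the Gram-matrix observation, the van der Corput reduction in $m$, and the assembly of the quantifiers — are routine once that input is available.
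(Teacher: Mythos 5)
Your overall architecture --- van der Corput in the averaging variable $m$, reducing $\Vert w\Vert_N$ to the shifted correlations $\frac1M\sum_m\langle v_m,v_{m+h}\rangle$, which are then to be killed by weak mixing --- is the same as the paper's. But the argument has a genuine gap exactly where you flag ``the main obstacle'': the decay, for most shifts $h$, of
\[
\frac1M\sum_{m=1}^M\frac1N\sum_{n=1}^N\prod_{i=1}^k\xi_i(a_in+b_i(m+h))\,\xi_i(a_in+b_im).
\]
This is not ``input from the Appendix''; it is the bulk of the paper's Section \ref{lin_proof_suff} (Lemmas \ref{autocorrelation1} and \ref{autocorrelation2}). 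The correlation is a product of autocorrelation functions $\psi^j_{r,\vec{i}}(n)=\prod_{w\in\{0,1\}^j}\xi(n+r+w\cdot\vec{i})$ evaluated along the forms $a_pn+b_pm$, and controlling it requires a PET-type induction on $k$: a change of variables $y=a_1x+b_1m$ eliminates one factor at the cost of lengthening every remaining $\psi$ by one shift coordinate, the base case $k=1$ being handled by the Bergelson--McCutcheon multiparameter weakly mixing theorem applied to $X_\xi\times X_\xi$ together with genericity of $\xi$ and the von Neumann ergodic theorem. You correctly diagnose that one cannot simply invoke weak mixing of all orders for the measure (a generic point need not be generic in the product system) and that a van der Corput/PET induction ``carried out directly at the level of the generic point'' is needed --- but you do not carry it out, and it is the entire mathematical content of the proposition.

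A second, smaller problem: your Gram-matrix step presupposes that $I(m,m')=\lim_{N}\langle v_m,v_{m'}\rangle_N$ exists. Genericity of each $1_{A_i}$ for its own system does not give this: the summand is $G(T^{a_1n}1_{A_1},\dots,T^{a_kn}1_{A_k})$ for a continuous $G$ on the product space, and convergence would require the tuple to be generic for some measure under $T^{a_1}\times\cdots\times T^{a_k}$, which is not among the hypotheses. The paper sidesteps this by never taking $N\to\infty$: its van der Corput lemma (Lemma \ref{vdrCorput}) is finitary, all estimates hold for $N$ large enough given $M$, and this is precisely why the conclusion carries the quantifier order ``for every $M\geq M(\varepsilon)$ there exists $N(M,\varepsilon)$''. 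Your final assembly (``since $\Vert w\Vert_N^2\to c(M)$'') leans on the same unproved convergence. (Your observation about the degenerate case $k=1$, $b_1=0$ is correct, and is the reason the paper's Lemma \ref{autocorrelation2} carries the proviso $b_1\neq0$ when $k=1$.)
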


\noindent Since the proof of Proposition \ref{main_prop} involves many
technical details, first we show how our main result follows from
it. Afterwards we state and prove all the lemmas necessary for the proof of
Proposition \ref{main_prop}.
% and define all the required concepts.
\newline
We use an easy consequence of Proposition \ref{main_prop}.
\begin{corollary}
\label{corol_main_prop} Let \( A \) be a WM set. Let \( k \in
\mathbb{N} \), suppose 
\newline
\( (a_1,b_1),(a_2,b_2), \ldots, (a_k,b_k) \in
\mathbb{Z}^2 \) satisfy all requirements of
Proposition \ref{main_prop} and suppose \( f_1,\ldots,f_k \in
\mathbb{Z} \). Then for every \( \delta > 0 \) there exists \(
M(\delta)\) such that \( \forall \, M \geq M(\delta) \) there exists
\( N(M,\delta) \) such that \( \forall \, N \geq N(M,\delta)\) we
have
\[
\left| \Vert v \Vert_N - d^k(A) \right| < \delta,
\]
where \( v(n) \doteq \frac{1}{M} \sum_{m=1}^M 1_A(a_1n + b_1 m +f_1)
1_A(a_2 n + b_2 m +f_2)
 \ldots 1_A(a_k n + b_k m + f_k) \) for every  \( n=1,2,\ldots,N\).
\end{corollary}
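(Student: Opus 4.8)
The plan is to expand $1_A$ in terms of $\xi$ and the density. Write $1_A(x) = \xi(x) + d$ where $d = d(A)$. For each $n$, the term $1_A(a_1 n + b_1 m + f_1)\cdots 1_A(a_k n + b_k m + f_k)$ expands, upon substituting $1_A = \xi + d$ in each factor, into a sum over subsets $\emptyset \subseteq I \subseteq \{1,\dots,k\}$ of products $d^{k-|I|} \prod_{i \in I} \xi_i(a_i n + b_i m + f_i)$. Averaging over $m = 1,\dots,M$ gives
\[
v(n) = \sum_{I \subseteq \{1,\dots,k\}} d^{\,k-|I|} \, w_I(n), \qquad w_I(n) \doteq \frac{1}{M} \sum_{m=1}^M \prod_{i \in I} \xi_i(a_i n + b_i m + f_i),
\]
where the $I = \emptyset$ term is the constant $d^k$. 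So $v(n) - d^k = \sum_{I \neq \emptyset} d^{k-|I|} w_I(n)$, and by the triangle inequality in $L^2(N)$,
\[
\bigl| \|v\|_N - d^k \bigr| \leq \|v - d^k \mathbf{1}\|_N \leq \sum_{I \neq \emptyset} d^{\,k-|I|} \|w_I\|_N.
\]

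**Applying the Proposition to each $w_I$.** For a fixed nonempty $I = \{i_1 < \dots < i_s\}$, the vector $w_I$ has exactly the form treated in Proposition~\ref{main_prop}, applied to the $s$ WM sets $A_{i_1}, \dots, A_{i_s}$ (here all equal to $A$, which is harmless) and the $s$ pairs $(a_{i_1}, b_{i_1}), \dots, (a_{i_s}, b_{i_s})$: each $a_i > 0$, and the nonvanishing of the $2\times 2$ determinants for $i \neq j$ is inherited from the hypothesis. The only cosmetic discrepancy is the additive shift $f_i$ inside $\xi_i(a_i n + b_i m + f_i)$; one absorbs this by noting that $\xi_i(a_i n + b_i m + f_i) = \tilde\xi_i(a_i n + b_i (m + \text{something}) )$ is not literally of the stated form, so instead I would observe that the proof of Proposition~\ref{main_prop} (or a trivial re-indexing: replace $\xi_i$ by its shift $\xi_i(\cdot + f_i)$, which is still the centered indicator of the WM set $A_i - f_i$, itself a WM set with the same density) applies verbatim. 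Thus, given the target $\delta > 0$, choose $\varepsilon$ small enough that $(2^k - 1)\,\varepsilon < \delta$ (using $0 \le d \le 1$ so $d^{k-|I|} \le 1$); applying the Proposition to each of the finitely many nonempty $I$ yields thresholds $M_I(\varepsilon)$ and, for $M \ge M_I(\varepsilon)$, thresholds $N_I(M,\varepsilon)$ with $\|w_I\|_N < \varepsilon$ for $N \ge N_I(M,\varepsilon)$. Take $M(\delta) = \max_I M_I(\varepsilon)$ and $N(M,\delta) = \max_I N_I(M,\varepsilon)$.

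**Conclusion.** For $M \ge M(\delta)$ and $N \ge N(M,\delta)$ we get $\bigl|\|v\|_N - d^k\bigr| \le \sum_{I \neq \emptyset} d^{k-|I|}\|w_I\|_N < (2^k - 1)\varepsilon < \delta$, which is the assertion. The substantive content is entirely in Proposition~\ref{main_prop}; the only mild point to get right is the bookkeeping of the shifts $f_i$ and the fact that finitely many threshold functions can be dominated by a single pair $(M(\delta), N(M,\delta))$ by taking maxima — so there is no real obstacle here beyond careful constant-chasing.
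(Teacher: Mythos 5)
Your argument is exactly the paper's proof, just written out in more detail: the paper likewise substitutes $1_A = \xi + d(A)$, absorbs the shifts by setting $\xi_i(n) = \xi(n+f_i)$ (a zero-average WM sequence), and invokes the triangle inequality together with Proposition~\ref{main_prop} applied to each nonempty subset of factors. Your explicit bookkeeping of the $2^k-1$ terms and the maxima over thresholds is a faithful elaboration of what the paper leaves implicit.
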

\begin{proof}
We rewrite \( v(n) \) in the following form:
\[
v(n) =  \frac{1}{M} \sum_{m=1}^M (\xi_1(a_1n + b_1 m ) +d(A))
%(\xi(a_2 n + b_2 m ) +d(A))
 \ldots  (\xi_k(a_k n + b_k m )+d(A)),
\]
for every \( n=1,2,\ldots,N\). We  introduce normalized WM
sequences \( \xi_i(n) = \xi(n+f_i) \) (of zero average), where \( \xi(n) = 1_A(n) - d(A) \). By use of triangular
inequality and Proposition \ref{main_prop} it follows that for big
enough \( M \) and \(N \) (which depends on \(M \)) \( \Vert v
\Vert_N \) is as close as we wish to \( d^k(A) \). This
finishes the proof.

 \hspace{12cm} \qed
\end{proof}

\noindent
\begin{proof} (of Theorem \ref{main_thm_lin}, \( \Lleftarrow \))
Let $A \subset \mathbb{N}$ be a WM set.
Without loss of generality, we can assume that for every \( r: 1 \leq r \leq l \) we have \( r \in F_r \).  
\newline
 It follows from Corollary \ref{corol_main_prop} that  the vector \( v \) defined by 
\[
v(n) \doteq \frac{1}{M} \sum_{m=1}^M 1_A(a_1n + b_1 m +f_1)
1_A(a_2 n + b_2 m +f_2)
 \ldots 1_A(a_l n + b_l m + f_l)
\]
for every  \( n=1,2,\ldots,N\), is not identically zero for big enough \(M \) and \( N \). But this is possible only if for some \(n,m \in \mathbb{N} \) we have
\[
(a_1n + b_1 m +f_1, a_2 n + b_2 m +f_2, \ldots, a_l n + b_l m + f_l) \in A^l.
\]
The latter implies that  $A^k$ intersects the affine subspace.

\hspace{12cm} \qed
\end{proof}

\noindent
\begin{proof} (of Theorem \ref{normal_theorem}, \( \Lleftarrow \))
For every \( r: \, 1 \leq r \leq l \) take all indices which comprise \( F_r \). 
Denote this sequence of indices by \( I_r \).
Denote \( c_r = \min_{i \in I_r} f_i \).  Let \( S_r \) be the set of all non-zero shifts of \( f_i, i \in F_r, \) centered at \( c_r \), i.e.,
\[
S_r = \{ f_i - c_r \, | \, i \in F_r, \, f_i  > c_r \}.
\]
For example, if the sequence of \( f_i \)'s where \( i \in F_1 \) is \( (-5,2,3,2,-5) \), then 
\( S_1 = \{7,8\} \).
\vspace{0.1in}

\noindent
Let \( A \) be a normal set. For every \(r \, : \, 1 \leq r \leq l \) we define sets \( A_r \) by 
\[
A_r = \{ n \in \N \cup \{0\} \, | \, n \in A \, \, and \,\,  n + s \in A, \, \forall s \in S_r\}.
\] 
Then \( A_r \) is no longer a normal set provided that \( S_r \neq \emptyset \) (\( d(A) = \frac{1}{2^{1+|S_r|}} \)). But, for all \( r \, : \, 1 \leq r \leq l \) the sets \( A_r \)'s are WM sets.

\noindent
Without loss of generality, assume that for every \( r \, : \, 1 \leq r \leq l \) we have \( r \in F_r \).

\noindent
From Proposition \ref{main_prop} it follows that for big enough \( M \) and \( N \) 
\[
\frac{1}{N} \sum_{n=1}^N \frac{1}{M} \sum_{m=1}^M 1_{A_1}(a_1n + b_1 m )
1_A(a_2 n + b_2 m)
 \ldots 1_A(a_l n + b_l m) \approx \prod_{r=1}^l d(A_r).
\]  
The latter ensures that there exist \( m,n \in \N \) such that 
\[
(a_1n +b_1m+f_1, \ldots, a_kn + b_km+f_k) \in A^k.
\]

\hspace{12cm} \qed
\end{proof}

\noindent Now we state and prove all the claims that are required in
order to prove Proposition \ref{main_prop}.

%%%%%%%%%%%%%%%%%%%%%%%%%%%%%%%%%%%%%%%%%%%%%%%%%%%%%%%%%%%%%%%%%%%%%%%%%%%%%%%%%%%%%%%%%%%%%%%%%%%%%%%
%%%%%%%%%%%%%%%%%%%%%%%%%%%%%%%%%%%%%%%%%%%%%%%%%%%%%%%%%%%%%%%%%%%%%%%%%%%%%%%%%%%%%%%%%%%%%%%%%%%%%%%

%Additionally we will use the following proposition.

\begin{definition}
Let \( \xi \) be a WM-sequence (\( \xi \) is a generic point for a weakly mixing system \( (X_{\xi}, \mathbb{B}_{X_{\xi}}, \mu,T)\)) of zero average. The autocorrelation
function of \( \xi \) of length \( j \in \mathbb{N} \) with
the shifts \( \vec{i} = (i_1,i_2,\ldots,i_j) \in \Z^j \) and \( r \in \Z \) 
is the sequence \( \psi_{r,\vec{i}}^j \) which is defined by 
\[
\psi^j_{r,\vec{i}}(n)= \prod_{w \in \{0,1\}^j} \xi(n + r + w \cdot \vec{i}), \,\, n \in \N,
\]
where \( w \cdot \vec{i} \) is the usual scalar product in \( \mathbb{Q}^j \),
and 
\[
\psi^j_{r,\vec{i}}(n)= 0, \,\, n \leq 0.
\]
\end{definition}

\noindent
\begin{lemma}
\label{autocorrelation1} Let \( \xi \) be a WM-sequence of zero
average and suppose \( \varepsilon,\delta > 0, \, b \in \mathbb{Z} \setminus \{0\} \). Then for every
\( j \geq 1 \), \((c_1,c_2,\ldots,c_j) \in (\mathbb{Z} \setminus
\{0\})^j \) and  \( (r_1,r_2,\ldots,r_j) \in \mathbb{Z}^j \)
there exist \( I = I(\varepsilon,\delta,c_1,\ldots,c_n) \), a set \( S \subset [-I,I]^j \) of density at
least \( 1- \delta \) and \( N(S,\varepsilon) \in
\mathbb{N} \), such that for every \( N \geq N(S,\varepsilon)\)
there exists \( L(N,S,\varepsilon) \) such that for every \( L
\geq L(N,S,\varepsilon) \)
\[
  \frac{1}{L} \sum_{l=1}^L  \left( \frac{1}{N}  \sum_{n=1}^N
\psi^j_{r, (c_1i_1,\ldots, c_ji_j)}(l+ b
n)
 \right)^2 < \varepsilon
\]
for every \( (i_1,i_2,\ldots,i_j) \in S \), where \( r = \sum_{k=1}^j r_k \).
\end{lemma}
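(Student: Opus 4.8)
The plan is to reduce the claimed bound to a statement about averages of a single WM-sequence along an arithmetic progression, and then to exploit weak mixing of the associated dynamical system. First I would observe that $\psi^j_{r,(c_1i_1,\dots,c_ji_j)}(n)$ is, up to the shift by $r$, a product of $2^j$ translates of $\xi$ by the $2^j$ sums $w\cdot(c_1i_1,\dots,c_ji_j)$; viewing this as a function on the orbit closure $X_\xi$, it is of the form $g_{\vec i}(T^{n}\xi)$ for a continuous function $g_{\vec i}\in C(X_\xi)$ depending on the parameters. Since $\xi$ is generic, the inner average $\frac1N\sum_{n=1}^N \psi^j_{r,\cdot}(l+bn)$ converges (for fixed $l$, along the progression $n\mapsto l+bn$) to $\int_{X_\xi} g_{\vec i}\,d\mu$; here I would use that $(X_\xi,T^b)$ is still ergodic (indeed weakly mixing, because weak mixing passes to powers) so that the orbit of $\xi$ under $T^b$ is also generic, and the dependence on the residue class of $l$ mod $b$ washes out in the outer average over $l$. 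Thus the double average on the left tends to $\bigl(\int_{X_\xi} g_{\vec i}\,d\mu\bigr)^2$.

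The heart of the matter is then to show that $\int_{X_\xi} g_{\vec i}\,d\mu$ can be made uniformly small for all $\vec i$ in a set $S\subset[-I,I]^j$ of relative density $\geq 1-\delta$, once $I$ is chosen large. I would prove this by induction on $j$. Since $\int g_{\vec i}\,d\mu = \int \prod_{w\in\{0,1\}^j}\xi(n+r+w\cdot\vec i)\,$, pull out the two groups of factors according to whether the last coordinate $w_j$ of $w$ is $0$ or $1$: the integral becomes $\int \Phi(x)\,\Phi(T^{c_ji_j}x)\,d\mu(x)$ where $\Phi$ is (essentially) the function realizing the length-$(j-1)$ autocorrelation $\psi^{j-1}_{r,(c_1i_1,\dots,c_{j-1}i_{j-1})}$. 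For a weakly mixing system, for any fixed $L^2$ function $\Phi$ one has $\frac1{2I+1}\sum_{|t|\leq I}\bigl|\int \Phi\cdot \Phi\circ T^{c_j t}\,d\mu - (\int\Phi\,d\mu)^2\bigr|^2\to 0$ as $I\to\infty$ (this is the standard consequence of weak mixing along the subsequence $c_j t$, using that weak mixing is equivalent to the product system being ergodic and hence to the vanishing of these Cesàro averages of correlations, and passes to the arithmetic subsequence $t\mapsto c_j t$). Hence outside a set of $i_j$ of density $<\delta/j$, the inner integral is within a small amount of $(\int\Phi\,d\mu)^2$, i.e.\ of the square of the length-$(j-1)$ autocorrelation integral, to which the inductive hypothesis applies (with $\delta$ replaced by $\delta$ minus what was already spent, and $\varepsilon$ suitably shrunk so that squaring and the triangle inequality keep things under control). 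The base case $j=0$ (or $j=1$) is exactly the weak-mixing correlation estimate just quoted, together with the fact that $\xi$ has zero average so the main term $(\int\xi\,d\mu)^2$ vanishes.

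Assembling the two halves: fix $\varepsilon>0$; by the induction choose $I$ and $S\subset[-I,I]^j$ of density $\geq 1-\delta$ so that $\bigl|\int_{X_\xi}g_{\vec i}\,d\mu\bigr|<\sqrt{\varepsilon}/2$ for all $\vec i\in S$; then for each such $\vec i$, by genericity of $\xi$ under $T^b$ pick $N(S,\varepsilon)$ and, for $N\geq N(S,\varepsilon)$, pick $L(N,S,\varepsilon)$ so that the double average is within $\sqrt\varepsilon/2$ of $\bigl(\int g_{\vec i}\,d\mu\bigr)^2$ for all $\vec i\in S$ simultaneously (finitely many $\vec i$, so a single $N,L$ works). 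This yields the displayed inequality $<\varepsilon$. The main obstacle I anticipate is bookkeeping in the inductive step: one must arrange the quantifier order correctly — $I$ depending only on $\varepsilon,\delta$ and the $c_k$'s, then $N$ uniform over the finite set $S$, then $L$ uniform over $S$ — and carefully track how the $L^2$-closeness of the double average to the squared integral degrades through the two nested Cesàro averages and through passing from $\frac1N\sum_n$ along $l+bn$ to its limit. A secondary technical point is justifying the exchange of the $n$-average limit with the $l$-average: this is where one uses that $(X_\xi,T^b)$ is uniquely-ergodic-like behaviour is \emph{not} available, so instead one runs the genericity of $\xi$ for $T^b$ directly on the function $l\mapsto (\text{something})$, or equivalently notes $\frac1L\sum_l\bigl(\frac1N\sum_n\psi(l+bn)\bigr)^2$ is itself a Cesàro average of a continuous function along the $\xi$-orbit and converges by genericity to the corresponding integral over $X_\xi$.
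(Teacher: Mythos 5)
Your overall architecture matches the paper's: lift $\psi^j_{r,\vec i}$ to a continuous function $g_{r,\vec i}$ on $X_\xi$, convert the double Ces\`aro average into $\int_{X_\xi}\bigl(\frac1N\sum_{n}T^{bn}g_{r,\vec i}\bigr)^2d\mu$ via genericity of $\xi$, and then let $N\to\infty$. Your closing remark --- that $\frac1L\sum_l\bigl(\frac1N\sum_n\psi(l+bn)\bigr)^2$ is itself a Ces\`aro average along the $T$-orbit of $\xi$ of a continuous function --- is exactly the paper's move and is the correct replacement for your earlier claim that the inner $n$-average converges for each fixed $l$ (genericity of $\xi$ gives you nothing about individual points $T^l\xi$, uniformly in $l$). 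Note also that the paper does not use, and you should not assert, that ``the orbit of $\xi$ under $T^b$ is also generic'': that is a separate nontrivial fact (it is Proposition 5.2 of the appendix, proved by disjointness from Kronecker systems). What is actually needed here is only the von Neumann mean ergodic theorem for the ergodic transformation $T^b$, applied in $L^2(\mu)$.

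The genuine gap is in the inductive proof that $\int g_{r,\vec i}\,d\mu$ is small off a set of $\vec i$ of small density. When you peel off the last coordinate and write the integral as $\int\Phi_{\vec i'}\cdot\Phi_{\vec i'}\circ T^{c_ji_j}\,d\mu$, the weak-mixing correlation decay you invoke holds for each \emph{fixed} $\Phi$, but the threshold on $I$ beyond which the exceptional set of $i_j$ has density $<\delta/j$ depends on $\Phi_{\vec i'}$, hence on $\vec i'$ --- and $\vec i'$ ranges over the box $[-I,I]^{j-1}$ whose side is the very $I$ you are trying to fix. Taking a maximum over those finitely many $\vec i'$ forces the last side of the box to blow up relative to the others, so you do not obtain a subset of the cube $[-I,I]^j$ of density $1-\delta$ (nor the lower-density-one statement over arbitrary F\o lner boxes), which is what the lemma asserts and what Lemma 3.2 later uses fiberwise. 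The paper sidesteps this by quoting the multiparameter weakly mixing PET of Bergelson--McCutcheon applied to $X_\xi\times X_\xi$: the average of $g_{r,\vec i}\otimes g_{r,\vec i}$ over $\vec i$ in F\o lner boxes of $\mathbb{Z}^j$ converges in $L^2(\mu\times\mu)$ to $\prod_{\epsilon}\int T^rf\otimes T^rf\,d(\mu\times\mu)=0$, giving the density statement jointly in all $j$ coordinates at once. Your induction is essentially the skeleton of the proof of that theorem, but to make it work you must run the van der Corput argument at the level of the $L^2(\mu\times\mu)$-valued averages over $\vec i$, so that the estimates are uniform in the remaining parameters, rather than at the level of scalar correlation integrals for each fixed $\vec i'$.
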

%Before giving the proof of the lemma we remind to a reader the notion of disjointness of measure preserving systems which was introduced by  
\begin{proof}
We note that it is sufficient to prove the lemma in the case \( c_1
= c_2 = \ldots = c_j = 1 \), since if the average of nonnegative
numbers over a whole lattice is small, then the average over a
sublattice of a fixed positive density is also small.
\newline
Recall that \( \xi \in X_{\xi} \doteq \overline{\{T^n \xi\}_{n=0}^{\infty}} \subset  supp({\xi})^{\mathbb{N}_0} \),
where \( T \) is the usual shift to the left on the dynamical system \( supp({\xi})^{\mathbb{N}_0} \), and
by the assumption that \( \xi \) is a WM-sequence of zero average it follows that \( \xi \) is a generic
point of the weakly mixing system \( (X_{\xi},\mathbb{B_{X_{\xi}}},\mu,T) \) and the function
\( f \, : \, f(\omega) \doteq \omega_0 \) has zero integral.
\newline
Denote \( \vec{i} = (i_1,\ldots,i_j) \).
\newline
We define functions \(
g_{r, \vec{i}},g^*_{r, \vec{i}}  \) on \( X_{\xi} \) by 
\[
g_{r, \vec{i}} \doteq \prod_{\epsilon \in V_j} T^{r+  \epsilon \cdot \vec{i}} \circ f,
\]
\[
g^*_{r, \vec{i}} = \prod_{\epsilon
\in V_j^{*}} T^{r + \epsilon \cdot \vec{i}} \circ f , \] 
where \( V_j \) is the \( j\)-dimensional discrete
cube \( \{0,1\}^j\) and \( V_j^{*} \) is the  \(
j\)-dimensional discrete cube except the zero point. 
\vspace{0.1in}

\noindent
Notice that 
\[ g_{r, \vec{i}} (T^n \xi) = \psi^j_{r,\vec{i}} (n). \]

\noindent
We use the following theorem which is a special case  of a multiparameter weakly mixing PET of Bergelson and McCutcheon (theorem A.1 in \cite{b_mctch}; it is also a corollary of Theorem 13.1 of 
 Host and Kra in \cite{host-kra}).
\newline \textit{ Let \( (X,\mu,T)\) be a weakly mixing system. Given an
integer \( k \) and \( 2^k \) bounded functions \( f_{\epsilon} \)
on \( X \), \(\epsilon \in V_k\) , the
functions \[
 \prod_{i=1}^k \frac{1}{N_i - M_i} \sum_{n \in [M_1,N_1)
\times \ldots [M_k,N_k)} \prod_{\epsilon \in V_k^{*}} T^{\epsilon_1 n_1 + \ldots \epsilon_k n_k} \circ 
f_{\epsilon}
\]
converge in \( L^2(\mu)\) to the constant limit  \[
 \prod_{\epsilon \in
V_k^*} \int_{X} f_{\epsilon} d\mu 
\]
when \( N_1 - M_1,\ldots,N_k-M_k\) tend to \( +\infty\). }

\noindent From this theorem  applied to the
weakly mixing system \( X_{\xi} \times X_{\xi} \) and the functions \(
f_{\epsilon}(x) = T^r \circ f \otimes T^r \circ f \) for every \( \epsilon \in V_j \),
 we obtain for every  Folner
sequence \( \{F_n\} \) in \( \mathbb{N}^j \) that an average over the multi-index \(\vec{i} = \{i_1,\ldots,i_j\}\)
 of \(
g^*_{r, \vec{i}} \otimes
g^*_{r, \vec{i}}\) on \( F_n \)'s
converges to zero in \( L^2(\mu) \) (the integral of \(T^r\circ f \otimes T^r \circ f \) is zero).
Thus 
\[
\int_{X_{\xi} \times X_{\xi}} \prod_{i=1}^j \frac{1}{N_i-M_i} \sum_{\vec{i} \in [M_1,N_1) \times \ldots \times [M_j,N_j)} g_{r,\vec{i}}(x) g_{r,\vec{i}}(y) d \mu (x) d \mu(y) = 
\]
\[
\prod_{i=1}^{j} \frac{1}{N_i-M_i} \sum_{\vec{i} \in [M_1,N_1) \times \ldots \times [M_j,N_j)} \left( 
\int_{X_{\xi}} g_{r,\vec{i}}(x) d \mu(x) \right)^2 \to 0, 
\] 
as \( N_1 - M_1, \ldots , N_j -M_j \to \infty \).

\noindent As a result we obtain the following statement:
\newline
 \textit{For every \( \varepsilon
> 0 \), \( j \in \mathbb{N} \) and every fixed \(
(r_1,r_2,\ldots,r_j) \in \mathbb{N}^j\), there exists a subset \(
R \subset \mathbb{N}^j \) of lower density equal to one, such
that
\begin{equation}
\label{eq:one_lemma311}
 \left( \int_{X_{\xi}}
g_{r,\vec{i}} d\mu
\right)^2 < \varepsilon
\end{equation}
 for every \( \vec{i} \in R \), where \(r = \sum_{k=1}^j r_j\).}
 
\noindent Recall that \textit{lower density} of a subset \( R \subset \mathbb{N}^j \) is defined to be
\[
d_{*}(R) = \liminf_{N_1-M_1,\dots,N_j-M_j \to \infty} \frac{\#\{R\cap[M_1,N_1)\times\ldots\times[M_j,N_j)\}}{\prod_{k=1}^j (N_k-M_k)}.
\]

\noindent Recall that \(
\psi^j_{r, \vec{i}}(l+ b n) =
g_{r, \vec{i}}\left( T^{l+bn} \xi
\right) \).
\newline
The definition of the sequences \( \psi^j \) implies
\[
\lim_{L \rightarrow \infty} \frac{1}{L} \sum_{l=1}^L \left(
\frac{1}{N} \sum_{n=1}^N
\psi^j_{r_1,\vec{i}}(l+bn) \right)^2
\]
\[
= \lim_{L \rightarrow \infty} \frac{1}{L} \sum_{l=1}^L \left(
\frac{1}{N} \sum_{n=1}^N \psi^j_{r_2, (\pm i_1, \ldots, \pm i_j)}(l \pm bn) \right)^2,
\]
for any \( r_1,r_2 \in \Z \), where \( \vec{i} = (i_1,\ldots,i_j)\).

\noindent
Therefore, in order to prove Lemma \ref{autocorrelation1}\, it
is sufficient to show the following:
\newline
\textit{For every \( \varepsilon,\delta > 0 \) and for any a priori
chosen \( b \in \mathbb{N} \)
 there exists  \(
I(\varepsilon,\delta) \in \mathbb{N} \), such that for every \( I
\geq I(\varepsilon,\delta) \) there exists a subset \( S \subset
[1,I]^j \) of density at least \( 1 - \delta \) \( \openrm
\)namely, we have \( \frac{|S \cap [1,I)^j|}{I^j} \geq 1-\delta
\)\( \closerm \) and  \( N(S,\varepsilon) \in
\mathbb{N} \), such that for every \( N \geq N(S,\varepsilon)\)
there exists \( L(N,S,\varepsilon) \in \mathbb{N} \) such that for
every \( L \geq L(N,S,\varepsilon) \) the following holds for
every \( \vec{i} \in S \):
\[
  \frac{1}{L} \sum_{l=1}^L  \left( \frac{1}{N}  \sum_{n=1}^N
\psi^j_{0, \vec{i}}(l+ b n)
 \right)^2 < \varepsilon.
\]
}

\noindent
Let \( b \in \mathbb{N} \). Continuity of
the function \newline \(
g_{0, \vec{i}} \) and genericity
of the point \( \xi \in X_{\xi} \) yield
\[
\lim_{L \rightarrow \infty} \frac{1}{L} \sum_{l=1}^L \left(
\frac{1}{N} \sum_{n=1}^N
\psi^j_{0, \vec{i}}(l+bn) \right)^2
\]
\[
= \lim_{L \rightarrow \infty} \frac{1}{L} \sum_{l=1}^L \left(
\frac{1}{N} \sum_{n=1}^N T^{bn}
g_{0, \vec{i}}\left( T^{l} \xi
\right)\right)^2
\]
\begin{equation}
\label{eq_lemm_2}
 = \int_{X_{\xi}} \left( \frac{1}{N} \sum_{n=1}^N T^{bn}
g_{0, \vec{i}} \right)^2 d \mu.
\end{equation}
By applying the von Neumann  ergodic theorem to the ergodic system
\newline
\( (X_{\xi},\mathbb{B},\mu,T^b) \) (ergodicity follows from weak-mixing of the original measure preserving system \( (X_{\xi},\mathbb{B},\mu,T) \)) we have
\begin{equation}
\label{imp_eq_22}
 \frac{1}{N} \sum_{n=1}^N T^{bn}
g_{0, \vec{i}} \rightarrow_{N
\rightarrow \infty}^{L^2(X_{\xi})} \int_{X_{\xi}}
g_{0, \vec{i}} d \mu.
\end{equation}
From (\ref{eq:one_lemma311}) there exists \( I(\varepsilon,\delta) \in \mathbb{N} \) big enough that for every \( I \geq I(\varepsilon,\delta) \) there
exists a set \( S \subset [1,I]^j \) of density at least \(
1-\delta \) such that
\[
 \left( \int_{X_{\xi}}
g_{0, \vec{i}} d \mu \right)^2 <
\frac{\varepsilon}{4}
\]
for all  \(\vec{i} \in S \).
\newline
From equation (\ref{imp_eq_22}) it follows that there exists \(
N(S,\varepsilon) \in \mathbb{N} \), such that for every \( N \geq
N(S,\varepsilon) \) we have
\[
\int_{X_{\xi}}\left( \frac{1}{N} \sum_{n=1}^N T^{bn}
g_{0, \vec{i}} \right)^2 d\mu <
\frac{\varepsilon}{2}
\]
for all \( \vec{i} \in S \).
\newline
Finally, equation (\ref{eq_lemm_2}) implies that there exists \(
L(N,S,\varepsilon) \in \mathbb{N} \), such that for every \( L \geq
L(N,S,\varepsilon) \) we have
\[
 \frac{1}{L} \sum_{l=1}^L \left(
\frac{1}{N} \sum_{n=1}^N
\psi^j_{0, \vec{i}}(l+bn) \right)^2
< \varepsilon
\]
for all \( \vec{i} \in S \).

\hspace{12cm} \qed
\end{proof}

\noindent The following lemma is a generalization of the previous
lemma to a product of several autocorrelation functions.
\begin{lemma}
\label{autocorrelation2}
 Let \(
\psi^{1,j}_{r_1,\vec{i}},
\ldots ,
\psi^{k,j}_{r_k,\vec{i}} \) be
autocorrelation functions of length \( j \) of WM-sequences \(
\xi_1, \ldots, \xi_k \) of zero average, 
\newline
\(
\{c_1^1,\ldots,c_j^1,\ldots, c_1^k,\ldots,c_j^k\} \in (\mathbb{Z}
\setminus \{0\})^{jk}\) and \( \varepsilon,\delta
> 0 \). Suppose\newline \( (a_1,b_1),(a_2,b_2), \ldots, (a_k,b_k) \in
\mathbb{Z}^2 \), such that \( a_i > 0 \) for all \( i: \, 1 \leq i \leq k \) and for every \( i \neq j \)
\begin{displaymath}
\det \left( \begin{array}{cc}
a_i & b_i \\
a_j & b_j  \\
\end{array}
\right) \neq 0.
\end{displaymath}
\textnormal{(}If \( k= 1 \) assume that \( b_1 \neq 0 \).\textnormal{)}
\newline
Then there exists \(I(\varepsilon,\delta) \in \mathbb{N} \), such
that for every \( I \geq I(\varepsilon,\delta) \) there exist \(
S \subset [-I,I]^j \) of density at least \( 1- \delta \), \(
M(S,\varepsilon) \in \mathbb{N} \), such that for every \( M \geq
M(S,\varepsilon) \) there exists \( X(M,S,\varepsilon) \in
\mathbb{N}\), such that for every \( X \geq X(M,S,\varepsilon) \)
\[
 \frac{1}{X} \sum_{x=1}^X \left( \frac{1}{M} \sum_{m=1}^M
\psi^{1,j}_{r_1,(c_1^1 i_1,\ldots, c_j^1 i_j)}(a_1x+ b_1 m) \ldots \psi^{k,j}_{r_k,(c_1^k i_1, \ldots, c_j^k i_j)}(a_kx+ b_k m)\right)^2 < \varepsilon
\]
for every \( (i_1,i_2,\ldots,i_j) \in S \).
\end{lemma}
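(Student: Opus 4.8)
The plan is to run, on the product system, the same argument as in the proof of Lemma \ref{autocorrelation1}. Write $\vec s_i:=(c^i_1 i_1,\dots,c^i_j i_j)$, and put $Y:=X_{\xi_1}\times\cdots\times X_{\xi_k}$, $\nu:=\mu_1\times\cdots\times\mu_k$. Since each $(X_{\xi_i},\mu_i,T)$ is weakly mixing and a finite product of weakly mixing systems is weakly mixing, $(Y,\nu,T\times\cdots\times T)$ is weakly mixing; hence so are $(Y,\nu,U)$ and $(Y,\nu,V)$ for $U:=T^{a_1}\times\cdots\times T^{a_k}$ and $V:=T^{b_1}\times\cdots\times T^{b_k}$ --- using, for $V$, that by non-degeneracy at most one $b_i$ vanishes (and the case $k=1$, $b_1=0$, is excluded), the exceptional $m$-independent factor being pulled out of the inner $m$-average and bounded by $1$, which reduces us to the case $b_i\ne 0$ for all $i$. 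With $f_i(\omega):=\omega_0$ (so $\int f_i\,d\mu_i=0$ by genericity of $\xi_i$), set $g^{(i)}_{r_i,\vec i}:=\prod_{w\in\{0,1\}^j}T^{r_i+w\cdot\vec s_i}\circ f_i$ on $X_{\xi_i}$ and $G_{\vec i}:=\bigotimes_{i=1}^{k}g^{(i)}_{r_i,\vec i}$ on $Y$; then $g^{(i)}_{r_i,\vec i}(T^n\xi_i)=\psi^{i,j}_{r_i,\vec s_i}(n)$ and $\prod_{i=1}^{k}\psi^{i,j}_{r_i,\vec s_i}(a_ix+b_im)=G_{\vec i}(U^{x}V^{m}\vec\xi)$, where $\vec\xi:=(\xi_1,\dots,\xi_k)$. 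As in Lemma \ref{autocorrelation1}, a finite-index sublattice reduction lets us normalize the $c^i_\ell$ wherever divisibility is convenient (a Ces\`aro average of non-negative numbers over $\Z^j$ that is small stays small over any such sublattice).

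Next, exactly as in the proof of Lemma \ref{autocorrelation1}, I would apply the multiparameter weak-mixing theorem of Bergelson--McCutcheon \cite{b_mctch} (equivalently Host--Kra \cite{host-kra}) to the weakly mixing system $Y\times Y$, with all $2^{j}$ functions equal to $(\bigotimes_{i=1}^{k}T^{r_i}f_i)\otimes(\bigotimes_{i=1}^{k}T^{r_i}f_i)$ and the $\ell$-th averaging direction acting, in each copy of $Y$, through $T^{c^1_\ell}\times\cdots\times T^{c^k_\ell}$. Since $\int_Y(\bigotimes_{i=1}^{k}T^{r_i}f_i)\,d\nu=\prod_{i=1}^{k}(\int T^{r_i}f_i\,d\mu_i)=0$, the theorem gives that the average over $\vec i$ of $G^{*}_{\vec i}\otimes G^{*}_{\vec i}$ (the product over $\{0,1\}^{j}\setminus\{0\}$) tends to $0$ in $L^{2}(\nu\times\nu)$ along Folner sequences; multiplying by the fixed bounded function $(\bigotimes_{i=1}^{k}T^{r_i}f_i)\otimes(\bigotimes_{i=1}^{k}T^{r_i}f_i)$ and integrating, the average over $\vec i$ of $(\int_Y G_{\vec i}\,d\nu)^{2}$ tends to $0$. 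Hence there is $I(\varepsilon,\delta)$ such that for every $I\ge I(\varepsilon,\delta)$ there is $S\subset[-I,I]^{j}$ of density $\ge 1-\delta$ with $(\int_Y G_{\vec i}\,d\nu)^{2}<\varepsilon/4$ for every $\vec i\in S$.

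Finally, for fixed $\vec i\in S$ it remains to bound $\frac{1}{X}\sum_{x=1}^{X}(\frac{1}{M}\sum_{m=1}^{M}G_{\vec i}(U^{x}V^{m}\vec\xi))^{2}$ by $\varepsilon$, choosing $M$ first and then $X$ large. By the von Neumann ergodic theorem for the ergodic $V$, $\frac{1}{M}\sum_{m=1}^{M}V^{m}G_{\vec i}\to\int_Y G_{\vec i}\,d\nu$ in $L^{2}(\nu)$, so for $M$ large this average is $L^{2}(\nu)$-close to the constant $\int_Y G_{\vec i}\,d\nu$; one then integrates the resulting nearly constant function along the $U$-orbit of $\vec\xi$. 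I expect the main obstacle to be precisely this last step: the point $\vec\xi$ is in general not generic for $(Y,\nu,U)$ --- it lies on a diagonal when the $\xi_i$ coincide, and the $a_i$ need not equal $1$ --- so genericity for the product measure is not available. The way around it, which is the technical core, is to use only the genericity of each $\xi_i$ for its own one-dimensional system $(X_{\xi_i},\mu_i,T)$, together with the weak mixing of the $X_{\xi_i}$ and the pairwise non-degeneracy of the $(a_i,b_i)$: these yield the joint-equidistribution fact that, for continuous $h_i$, $\frac{1}{XM}\sum_{x=1}^{X}\sum_{m=1}^{M}\prod_{i=1}^{k}h_i(T^{a_ix+b_im}\xi_i)\to\prod_{i=1}^{k}\int h_i\,d\mu_i$, which is proved by a van der Corput argument feeding into the same Bergelson--McCutcheon theorem and belongs to the technical statements of the Appendix. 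Combining this (applied with $h_i$ approximating $g^{(i)}_{r_i,\vec i}$ and the relevant products) with the von Neumann step and the bound on $(\int_Y G_{\vec i}\,d\nu)^{2}$ over $S$ yields the asserted estimate, uniformly for $\vec i\in S$. The remaining points --- that the density-$(1-\delta)$ good set survives the finitely many reductions, and the base case $k=1$ (read off directly from Lemma \ref{autocorrelation1} using $b_1\ne 0$) --- are routine.
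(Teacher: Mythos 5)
There is a genuine gap, and you have in fact put your finger on exactly where it is: the point $\vec\xi=(\xi_1,\dots,\xi_k)$ is not generic for the product system $(Y,\nu)$, so after the von Neumann step you cannot evaluate $\frac{1}{X}\sum_{x}\bigl(\frac{1}{M}\sum_m G_{\vec i}(U^xV^m\vec\xi)\bigr)^2$ by integrating against $\nu$. Your proposed repair is to invoke a ``joint-equidistribution fact''
\[
\frac{1}{XM}\sum_{x=1}^{X}\sum_{m=1}^{M}\prod_{i=1}^{k}h_i\bigl(T^{a_ix+b_im}\xi_i\bigr)\longrightarrow\prod_{i=1}^{k}\int h_i\,d\mu_i ,
\]
which you assert is proved by a van der Corput argument and ``belongs to the technical statements of the Appendix.'' It does not: the Appendix contains only the van der Corput lemma, Proposition \ref{wm_prop} (a single sequence averaged over one parameter $n$ along $an+b_i$), and two lemmas about normal sets. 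More importantly, this equidistribution statement is not an auxiliary fact but is essentially the content of Proposition \ref{main_prop} and Corollary \ref{corol_main_prop}, which in the paper are \emph{deduced from} Lemma \ref{autocorrelation2}. The entire inductive machinery of Lemmas \ref{autocorrelation1} and \ref{autocorrelation2} exists precisely to establish two-parameter equidistribution along $a_ix+b_im$ for $k$ different WM sequences using only the genericity of each $\xi_i$ for its own system. Citing that conclusion to prove the lemma is circular; and carrying out the ``van der Corput argument feeding into Bergelson--McCutcheon'' that you allude to is not routine --- it is the proof you were asked to supply.

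For comparison, the paper's proof avoids the product-system genericity problem by induction on $k$: van der Corput in the $m$-variable replaces the product of $k$ autocorrelation functions of length $j$ by a product of $k$ autocorrelation functions of length $j+1$ (with the extra shift $b_pi$); the substitution $y=a_1x+b_1m$ makes the first factor depend on $y$ alone, so it can be bounded by $1$ and discarded, leaving $k-1$ factors along $a_pz+c_pn$ with the determinant condition preserved (here $c_p=(a_1b_p-a_pb_1)/d\neq0$); the fiberwise density argument over the new shift coordinate $i$ then hands the estimate to the induction hypothesis, and genericity is used only at the base case $k=1$ through Lemma \ref{autocorrelation1} and Proposition \ref{wm_prop}. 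Your first two steps (the product-system formulation and the Bergelson--McCutcheon bound on $\bigl(\int_YG_{\vec i}\,d\nu\bigr)^2$ for a density-$(1-\delta)$ set of $\vec i$) are sound and parallel the paper, but they are not where the difficulty lies; without an actual proof of the equidistribution of the single orbit point along the two-parameter family, the argument does not close. (A minor additional point: your base case also needs Proposition \ref{wm_prop} to handle $a_1>1$, not just Lemma \ref{autocorrelation1}.)
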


\noindent \begin{proof}
The proof is by induction on \( k \). 
\vspace{0.1in}

\noindent
THE CASE \( k=1 \) (and
arbitrary \( j \)):
\vspace{0.1in}

\noindent
If \( a_1 = 1 \) then the statement of the lemma follows from Lemma \ref{autocorrelation1}. If \( a_1 > 1 \) then by Proposition \ref{wm_prop} of Appendix for a given \( \vec{i} = (i_1,\ldots,i_j) \in S \) we have 
\[
\lim_{X \to \infty} \frac{1}{X} \sum_{x=1}^X \left( \frac{1}{M} \sum_{m=1}^M \psi_{r_1,(c_1^1i_1,\ldots,c_j^1 i_j)}^{1,j}(a_1x+b_1m) \right)^2 = 
\]
\begin{equation}
\label{sub_eq_1}
\lim_{X \to \infty} \frac{1}{X} \sum_{x=1}^X \left( \frac{1}{M} \sum_{m=1}^M \psi_{r_1,(c_1^1i_1,\ldots,c_j^1 i_j)}^{1,j}(x+b_1m) \right)^2 
\end{equation}
(Limits exist by genericity of the point \( \xi \).)

\noindent By Lemma \ref{autocorrelation1} the right hand side of (\ref{sub_eq_1}) is small for large enough \( M \). So, for large enough \( X \) (depending on \( M \) and \( (i_1,\ldots, i_j)\)) the statement of the lemma is true. By finiteness of \( S \) we conclude that the statement of the lemma holds for \( k = 1 \).
\vspace{0.1in}

\noindent
GENERAL CASE (\( k > 1 \)):
\vspace{0.1in}

\noindent  
Suppose that the statement holds for \( k-1 \).
\newline
Denote 
 \[
 v_m(x) \doteq \psi^{1,j}_{r_1,(c_1^1 i_1, \ldots, c_j^1 i_j)}(a_1x+ b_1 m) \ldots
\psi^{k,j}_{r_k, (c_1^k i_1,\ldots, c_j^k i_j)}(a_kx+
b_k m). \] 
Let $\varepsilon, \delta > 0$. We show that there exists  \(
\mathbb{I}(\varepsilon,\delta) \in \mathbb{N} \) such that for every $\mathbb{I}  > \mathbb{I}(\varepsilon,\delta)$ a set $S \subset [-\mathbb{I},\mathbb{I}]^j$ of density at least $1-\delta$ can be chosen satisfying the following property:
\vspace{0.1in}

There exists $I(\varepsilon, S) \in \mathbb{N}$ such that for every $I > I(\varepsilon,S)$ there exists $M(I) \in \mathbb{N}$ such that for all $M > M(I)$ for a set of $i$'s in $\{1,2,\ldots,I\}$ of density at least $1 - \frac{\varepsilon}{3}$ we have
\begin{equation}
\label{vdr_crp}
\left | \frac{1}{M} \sum_{m=1}^M
<v_m,v_{m+i}>_X\right| < \frac{\varepsilon}{2}
\end{equation}
for all $(i_1,\ldots,i_j) \in S$.
\vspace{0.1in}

\noindent
The Van der Corput lemma (Lemma \ref{vdrCorput} of 
Appendix) finishes the proof.

%The van der Corput lemma (Lemma \ref{vdrCorput} of 
%Appendix)
% implies that it is
%sufficient to show the existence of  \(
%\mathbb{I}(\varepsilon,\delta) \in \mathbb{N} \), such that for
%every \( \mathbb{I} \geq \mathbb{I}(\varepsilon,\delta) \) there
%exist a set \( S \subset [-\mathbb{I},\mathbb{I}]^j \) of density
%at least \( 1-\delta \), \(
%I(\varepsilon,S) \) big enough (\(
%I(\varepsilon,S) \geq I'(\varepsilon)\) from the  van der
%Corput lemma), \( M(I(\varepsilon,S),S,\varepsilon) \in
%\mathbb{N} \), such that for every \( M \geq
%M(I(\varepsilon,S),S,\varepsilon) \) we have
%\begin{equation}
%\label{vdr_crp}
%\left | \frac{1}{M} \sum_{m=1}^M
%<v_m,v_{m+i}>_X\right| < \frac{\varepsilon}{2},
%\end{equation}
%for all \( (i_1,\ldots,i_j) \in S \) for most of the \( i\)'s in the interval
%\( \{1,2,\ldots,I(\varepsilon,S) \}\) (density of such \(
%i \)'s should be at least \( 1 - \frac{\varepsilon}{3}\)).
%\vspace{0.1in}

\noindent 
Note that the set of ``good" \( i \)'s in the interval \(\{1,2,\ldots, I \}\) depends on \( (i_1,\ldots,i_j) \in S \). 
\vspace{0.1in}

\noindent
Denote
\[
\tilde{A} = \left | \frac{1}{M} \sum_{m=1}^M <v_m,v_{m+i}>_X\right|
\]
\[
 = \left| \frac{1}{X} \sum_{x=1}^X \frac{1}{M} \sum_{m=1}^M
\psi^{1,j+1}_{r_1, ( c_1^1 i_1, \ldots c_j^1 i_j, b_1
i )}(a_1x+ b_1 m) \ldots \psi^{k,j+1}_{r_k, (c_1^k i_1,\ldots, c_j^k i_j, b_k
i)}(a_kx+ b_k m) \right|.
\]
Denote \( y = a_1x+b_1m \). Assume that \( (a_1,b_1) = d \). Denote
\[
\tilde{B}_{y,m} = \psi^{1,j+1}_{r_1, ( c_1^1 i_1, \ldots c_j^1 i_j, b_1
i )}(y)
 \ldots
\psi^{k,j+1}_{r_k, (c_1^k i_1,\ldots, c_j^k i_j, b_k
i)}(a_k'y + b_k'm),
\]
where \(
a_p' = \frac{a_p}{a_1} \), \(b_p' = b_p - a_p'b_1 \), \( 2 \leq p \leq k \).
We rewrite \( \tilde{A} \) as follows:
\begin{equation}
\label{tilde_eq}
 \tilde{A} =
\left|a_1 \frac{1}{Y} \left(\sum_{l=0}^{\frac{a_1}{d}-1}
\sum_{y\equiv dl \mod a_1}^Y \frac{1}{M} \sum_{m \equiv \phi(l) \mod
\frac{a_1}{d}}^M \tilde{B}_{y,m}
\right)\right|
 + \delta_{X,M}.
\end{equation}
Here \( \phi \) is a bijection of 
\(\mathbb{Z}_{\frac{a_1}{d}}\) defined by the identity  
 \[
  \phi(l)\frac{b_1}{d} \equiv l  \, \left(\!\!\!\!\!\mod \frac{a_1}{d}\right), 
  \]
for every \( 0 \leq l \leq \frac{a_1}{d}-1 \), \( Y = a_1 X \),
\(a_p',b_p'\) as above and \( \delta_{X,M} \) accounts for the fact
that for small \( y\)'s and \( y \)'s close to \( Y \) there is a
difference between elements that are taken in the expression for \(
\tilde{A} \) and in the expression on the right hand side of equation (\ref{tilde_eq}).
 Nevertheless, we have \( \delta_{X,M} \rightarrow 0\)
if \( \frac{M}{X} \rightarrow 0 \).

\noindent
Denote 
\[
\tilde{C}_{y,m} = \psi^{2,j+1}_{r_2, (c_1^2i_1,\ldots,c_j^2i_j, b_2 i) }(a_2'y + b_2'm)
 \ldots
 \psi^{k,j+1}_{r_k, (c_1^k i_1,\ldots,c_j^k i_j,b_k
i )}(a_k'y + b_k'm).
\]
It will suffice to prove that there exists $\mathbb{I}(\varepsilon,\delta) \in \mathbb{N}$ such that for every $\mathbb{I} > \mathbb{I}(\varepsilon,\delta)$ we can find $S \subset [-\mathbb{I},\mathbb{I}]^j$ of density at least $1 - \delta$ with the following property:
\vspace{0.1in}

There exists $I(\varepsilon,S) \in \mathbb{N}$ such that for every $I > I(\varepsilon,S)$ there exists $M(I) \in \mathbb{N}$ such that for every $M > M(I)$ we can find $X(M) \in  \mathbb{N}$ such that for every $X > X(M)$ for a set of $i$'s in $\{1,2,\ldots,I\}$ of density at least $1 - \frac{\varepsilon}{3}$ we have  
\begin{equation}
\label{sub_eq_lemma}
 a_1\frac{1}{Y}  \sum_{y\equiv dl \mod a_1}^Y
\left(\frac{1}{M}
 \sum_{m \equiv \phi(l) \mod
\frac{a_1}{d}}^M \tilde{C}_{y,m}
 \right)^2 < \left( \frac{\varepsilon d}{3a_1} \right)^2
\end{equation}
for all \( 0 \leq l \leq \frac{a_1}{d}-1\), for all \( (i_1,\ldots,i_j) \in S \). 
\vspace{0.1in}

%\newline
%It will suffice to prove by Cauchy-Schwartz inequality that there exists
%\( \mathbb{I}(\varepsilon,\delta) \in \mathbb{N} \), such that for
%every \( \mathbb{I} \geq \mathbb{I}(\varepsilon,\delta) \) there
%exists a set \( S \subset [-\mathbb{I},\mathbb{I}]^j \) of density
%at least \( 1-\delta \) and there exist \(
%I(\varepsilon,S) \in \mathbb{N} \), \(
%M(I(\varepsilon,S)) \in \mathbb{N} \), such that for
%every \( M \geq M(I(\varepsilon,S))\) there exists \(
%X(M,S, \varepsilon) \in \mathbb{N} \) such that for every \( X \geq
%X(M,S, \varepsilon) \)  we have
%\begin{equation}
%\label{sub_eq_lemma}
% a_1\frac{1}{Y}  \sum_{y\equiv dl \mod a_1}^Y
%\left(\frac{1}{M}
% \sum_{m \equiv \phi(l) \mod
%\frac{a_1}{d}}^M \tilde{C}_{y,m}
%% \psi^{2,j+1}_{\ldots}(a_2'y + b_2'm)
%% \ldots
%%\psi^{k,j+1}_{\ldots}(a_k'y + b_k'm)
% \right)^2 < \left( \frac{\varepsilon d}{3a_1} \right)^2,
%\end{equation}
%for all \( 0 \leq l \leq \frac{a_1}{d}-1\), for all \( (i_1,\ldots,i_j) \in S \) for a set of \( i \)'s in the interval \(
%\{1,2,\ldots,I(\varepsilon,\mathbb{I})\}\) of density \(1 -
%\frac{\varepsilon}{3}\), where
%\[
%\tilde{C}_{y,m} = \psi^{2,j+1}_{r_2, (c_1^2i_1,\ldots,c_j^2i_j, b_2 i) }(a_2'y + b_2'm)
% \ldots
% \psi^{k,j+1}_{r_k, (c_1^k i_1,\ldots,c_j^k i_j,b_k
%i )}(a_k'y + b_k'm).
%\]

\noindent Note that it is enough to prove the latter statement for every particular \( l: \, 0 \leq l \leq \frac{a_1}{d} - 1 \).

\noindent
Denote the left hand side of inequality (\ref{sub_eq_lemma}) for a fixed \( l \) by \( \tilde{D}_l \).
\newline
Introduce new variables  \( z \) and \( n \), such that \( y = z a_1 + dl \) and \( m
= n \frac{a_1}{d}+ \phi(l) \). We obtain
\[
\tilde{D}_l = \frac{1}{Z} \sum_{z=1}^Z \left(\frac{d}{Na_1} \sum_{n=1}^N
\psi^{2,j+1}_{sh_2}\left(t_{n,z,l}^2\right)
 \ldots
\psi^{k,j+1}_{sh_k}\left(t_{n,z,l}^k\right)
  \right)^2
\]
\[
= \frac{1}{Z} \sum_{z=1}^Z \left(\frac{d}{Na_1} \sum_{n=1}^N
\psi^{2,j+1}_{sh_2}\left(a_2 z +c_2n + q_2\right)
 \ldots
\psi^{k,j+1}_{sh_k}\left(a_k z +c_kn + q_k\right)
  \right)^2,
\]
where \( sh_p = (r_p, (c_1^p i_1, \ldots, c_j^p i_j, b_p i)) \), 
\newline 
\( t_{n,z,l}^p = \frac{a_p (a_1z
+dl) + (a_1b_p-a_pb_1)(\frac{a_1}{d}n+\phi(l)) }{a_1}\), \( q_p =
\frac{a_p l d + (a_1 b_p-a_p b_1) \phi(l)}{a_1} \), \newline \( c_p =
\frac{a_1 b_p - a_p b_1}{d} \neq 0 \), \( Z = \frac{Y}{a_1} \) and
\( N = \frac{Md}{a_1} \). 
\newline
From the conditions on the function \( \phi \) it follows that \( q_p \in \Z , \, 2 \leq p \leq k\).

\noindent    From the conditions of the lemma we obtain for every \(
p \neq q , \, \, p,q > 1 \),
\[
\det \left( \begin{array}{cc}
a_p & c_p \\
a_q & c_q  \\
\end{array}
\right) = \frac{a_1 \det \left(\begin{array}{cc}
a_p & b_p \\
a_q & b_q  \\
\end{array}
\right)}{d} \neq 0.
\]
%%%%%%%%%%%%%%%%%??????????????????????????????
%%%%%%%%%%%%%%%%%%%????????????????????????????????????????
%%%%%%%%%%%%%%%%%%%%?????????????????????????????????????
%%%%%%
%%%%%%%           To check that everything is true with below calculations
%%%%%%%           in particular, whether the coefficients are integers????????????????????
%%%%%%%%%
%%%%%%%%%%%%%%%%%%%%%%%%%%%%%??????????????????????????%
%%%%%%%%%%%%%%%%%%%%%%%%?????????????????????????????%
%%%%%%%%%%%%%%%%%%%%%??????????????????????
\newline
Therefore, \( \tilde{D}_l \) can be rewritten as
\[
\tilde{D}_l = \frac{1}{Z} \sum_{z=1}^Z \left(\frac{1}{Na_1} \sum_{n=1}^N
\phi_2\left(a_2 z +c_2n \right)
 \ldots
\phi_k\left(a_k z +c_kn \right)
 \right)^2,
\]
where $\phi_{\ell} = \psi^{\ell,j+1}_{r_{\ell} + q_{\ell}, (c_1^{\ell} i_1, \ldots, c_j^{\ell} i_j, b_{\ell} i)}, \,\, 2 \le \ell \le k$.
By the induction hypothesis the following is true.
\vspace{0.1in}

\noindent
\textit{There exists \(
\mathbb{I}_l(\varepsilon,\delta') \in \mathbb{N} \)  big enough, such
that for every \( \mathbb{I}_l \geq \mathbb{I}_l(\varepsilon,\delta')
\) there exist a subset \( S_l \subset
[-\mathbb{I}_l,\mathbb{I}_l]^{j+1} \) of density at least \( 1 -
\delta'^2 \) and \( N(S_l,\varepsilon) \in \mathbb{N} \),
such that for every \( N \geq N(S_l,\varepsilon) \) there
exists \(Z(N,S_l,\varepsilon) \in \mathbb{N} \), such that
for every \( Z \geq Z(N,S_l,\varepsilon)\) we have 
\begin{equation}
\label{ineq_last} \tilde{D}_l < \left( \frac{\varepsilon d}{3a_1}
\right)^2
\end{equation}
 for all \( (i_1,\ldots,i_j,i) \in S_l\).}
 \vspace{0.1in}

\noindent
For every \( (i_1,\ldots,i_j) \in [-\mathbb{I}_l,\mathbb{I}_l]^j \) we
denote by \( S_{i_1,\ldots,i_j}^l \) the fiber above \( (i_1,\ldots,i_j) \):
\[
S_{i_1,\ldots,i_j}^l = \{ i \in [-\mathbb{I}_l,\mathbb{I}_l] \,\, | \,\,
(i_1,\ldots,i_j,i) \in S_l \}.
\]
Then there exists a set \( T_l \subset [-\mathbb{I}_l,\mathbb{I}_l]^j \)
of density at least \( 1 - \delta' \), such that for every \(
(i_1,\ldots,i_j) \in T_l \) the density of \( S_{i_1,\ldots,i_j}^l \) is
at least \( 1 - \delta' \). Let \( \varepsilon, \delta > 0\). Take  \( \delta' < \min{(\frac{\varepsilon }{6}, \delta )}\)
%(if we exclude zero then we might take \( \delta <
%\frac{\varepsilon}{6} \))
 and \( \mathbb{I}
> \max{(I'(\varepsilon),\mathbb{I}_l(\varepsilon,\delta'))}
\) (\( I'(\varepsilon)\) is taken from the van der Corput lemma).
 
 \noindent
 Then it follows by (\ref{ineq_last}) that there exists
 \( M(T_l,\varepsilon,\delta) \in \mathbb{N}\), such that for every
 \( M \geq M(T_l,\varepsilon,\delta)\)
 there exists 
 \( X(M,T_l,\varepsilon,\delta) \in \mathbb{N} \), such that for
 every \( X \geq X(M,T_l,\varepsilon,\delta) \) the
 inequality (\ref{sub_eq_lemma}) holds
for every fixed \( (i_1,\ldots,i_j) \in T_l \) for a set of \( i
\)'s within the interval \(\{1,\ldots,\mathbb{I}\}\) of density at
least \( 1 - \frac{\varepsilon}{3} \). The lemma follows from the van der Corput lemma.

\hspace{12cm} \qed
\end{proof}

%\noindent    We are ready to prove the main statement of this
%chapter, namely
\noindent
\textbf{Proof of Proposition  \rm{\ref{main_prop}}.}
\newline
%Again we use van der Corput lemma \ref{vdrCorput}.
Denote \( v_m(n) \doteq \xi_1(a_1 n + b_1 m) \ldots \xi_k(a_k n +
b_k m) \). For every \( i \in \mathbb{N} \) we introduce \( \tilde{A} \) defined by  
\[
\tilde{A}  \doteq \left| \frac{1}{M} \sum_{m=1}^M <v_m,v_{m+i}>_N \right|.
\]
Then
\[
\tilde{A} = \left| \frac{1}{N} \sum_{n=1}^N  \frac{1}{M} \sum_{m=1}^M
\psi_{0,(b_1i)}^{1,1}(a_1 n+b_1 m) \ldots
\psi_{0,(b_ki)}^{k,1}(a_k n+b_k m)\right| ,
\]
where the functions \( \psi^{p,j} \)'s are autocorrelation functions of
the \( \xi_p \)'s of length \( j \).
\newline
By Lemma \ref{autocorrelation2} it follows that for every \( \varepsilon > 0 \) there exists \( I(\varepsilon) \in \N \) such that for every \( I \geq I(\varepsilon) \) there exist \( S \subset \{1,2,\ldots, I\}\) of density at least \( 1 - \frac{\varepsilon}{3} \) and \( M(S,\varepsilon)\) such that for every \( M \geq  M(S,\varepsilon)\) there exists \( N(M,S,\varepsilon) \) such that for every \( N \geq N(M,S,\varepsilon) \) we have 
\[
 \frac{1}{N} \sum_{n=1}^N  \left(\frac{1}{M} \sum_{m=1}^M
\psi_{0,(b_2i)}^{2,1}(a_2 n+b_2 m) \ldots
\psi_{0,(b_ki)}^{k,1}(a_k n+b_k m)\right)^2 \leq \varepsilon^2.
\]
The proposition follows from the van der Corput Lemma \ref{vdrCorput}.

\hspace{12cm} \qed
%\end{proof}
%%%%%%%%%%%%%%%%%%%%%%%%%%%%%%%%%%%%%%%%%%%%%%%%%%%%%%%%%%%%%%%%%%%%%%%%%%%%%%%
%%%%%%%%%%%%%%%%%%%%%%%%%%%%%%%%%%%%%%%%%%%%%%%%%%%%%%%%%%%%%%%%%%%%%%%%%%%%%%
%%%%%%%%%%%%%%%%%%%%%%%%%%%%%%%%%%%%%%%%%%%%%%%%%%%%%%%%%%%%%%%%%%%%%%%%%%%%%%%
\section{Probabilistic constructions of WM sets}\label{lin_useful_constructions}
The goal of this section is to prove the necessity of the conditions
of Theorem \ref{main_thm_lin}. The following proposition is the main tool
for this task.
\begin{prop}
\label{3_constr_prop} Let \( a,b \in \mathbb{N} \), \( c \in \mathbb{Z} \) such that \( a
\neq b \). Then there exists a normal set \( A \) within which
the equation
\begin{equation}
\label{lin_eq_gen}
 ax=by + c
\end{equation}
is unsolvable, i.e., for every \( (x,y) \in A^2 \) we have \( ax
\neq by + c \).
\end{prop}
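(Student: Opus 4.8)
The plan is to build the normal set $A$ by a greedy/probabilistic construction: we will specify the characteristic sequence $1_A$ block by block, arranging that within each finite prefix every binary word of length $\leq \ell$ appears with frequency tending to $2^{-\ell}$ (so $A$ is normal), while at the same time forbidding all pairs $(x,y)$ with $ax = by+c$ from lying in $A^2$. The key structural observation is that the equation $ax = by+c$ with $a \neq b$ is \emph{not} partition regular, and more concretely one can find an explicit ``coloring obstruction'': there is a residue/interval condition that a single point can satisfy or fail in a way that blocks the equation. My approach will be to partition $\mathbb{N}$ into consecutive long intervals $I_1, I_2, \dots$ growing fast enough (say $|I_{t+1}|/|I_t| \to \infty$ and $\min I_{t+1} / \max I_t \to \infty$), and inside each $I_t$ reserve a sparse ``sacrificial'' sub-collection of positions where we force $1_A = 0$, chosen so that for every solution $(x,y)$ of $ax=by+c$ with $x,y$ not both tiny, at least one of $x,y$ falls in a zeroed-out position. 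Because the zeroed set will have density $0$ (indeed, we can make it have upper density $0$ along the relevant scales), deleting it does not disturb the normality frequencies, which we install on the complement by a standard concatenation-of-all-words construction (as in the usual explicit construction of a normal number / Champernowne-type sequence, adapted to leave the reserved positions blank).

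The main step is therefore the combinatorial lemma: \emph{one can choose, at each scale, a set $Z_t \subset I_t$ with $|Z_t| = o(|I_t|)$ such that every pair $(x,y) \in \mathbb{N}^2$ with $ax = by + c$ and $\max(x,y)$ in the range governed by $I_t$ has $x \in Z := \bigcup_s Z_s$ or $y \in Z$.} Here one exploits the rigidity $ax = by+c$: given $y$, the value $x = (by+c)/a$ is determined, so the solution set is essentially a single arithmetic-progression-like curve; one needs $Z$ to be a ``transversal'' hitting, for each valid $y$, either $y$ itself or the forced $x$. Since $a \ne b$, the map $y \mapsto x=(by+c)/a$ has slope $b/a \ne 1$, so it moves points between scales — if $y \in I_t$ then $x$ lies in a predictable, much longer interval, and conversely. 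The trick is to put $Z_t$ in $I_t$ designed to kill the solutions whose \emph{larger} coordinate sits in $I_t$: then for a solution with larger coordinate in $I_t$, the smaller coordinate is in some earlier $I_s$ but the \emph{larger} one is the one we zero. Making $|Z_t|$ sublinear is possible because on $I_t$ the set of admissible larger-coordinates that actually complete to a solution with integer smaller-coordinate is itself an arithmetic progression (modulus $\mathrm{lcm}$ considerations on $a,b$), hence has density $\asymp 1/a$ — still linear — so a little more care is needed: we instead remove only a \emph{density-zero} sub-progression by further thinning (e.g. keep only those $x \in I_t$ congruent to a fixed residue that also forces $x$ into $Z$), and handle the finitely-many residual small solutions by hand at the very start of the sequence.

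The places I expect friction: (i) reconciling ``density $0$ reserved set'' with ``all words appear with exactly the right frequency'' — one must verify that blanking a density-zero set of positions and filling the rest with a normality-forcing pattern still yields a genuinely normal sequence (this is routine but needs the reserved positions to be spread thinly enough at every scale, which is why the fast-growing intervals $I_t$ are used); (ii) making the transversal $Z$ genuinely density zero rather than merely small — as noted, the naive solution curve has positive density in each scale, so the real content is a pigeonhole/covering argument showing we can choose $Z$ to meet, for each solution, the larger coordinate only, which lives in the sparse ``top'' part of each scale; (iii) the boundary/small-$x,y$ solutions, which are finite in number and killed by declaring $1_A(n)=0$ for all $n$ below some threshold. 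Once the lemma is in place, normality of $A$ and unsolvability of $ax=by+c$ in $A^2$ follow immediately, giving the Proposition. $\Box$
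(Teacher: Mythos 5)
There is a genuine gap, and it sits exactly at your friction point (ii): the set $Z$ you need cannot have density zero. Assume as you may that $(a,b)=1$ and $a<b$. The solutions of $ax=by+c$ with both coordinates in $[1,N]$ are the pairs $\bigl(\tfrac{by+c}{a},y\bigr)$ with $y$ in a fixed residue class modulo $a$ and $y\lesssim aN/b$, so there are about $N/b$ of them. Viewing each solution as an edge $\{x,y\}$, these edges form a vertex-disjoint union of increasing chains $y\mapsto \tfrac{by+c}{a}\mapsto\cdots$ (each vertex has degree at most $2$, and each step multiplies by roughly $b/a>1$, so each chain meets $[1,N]$ in $O(\log N)$ vertices). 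Any set $Z$ that meets every solution is a vertex cover of this graph, and a path with $k$ edges needs at least $k/2$ vertices in any cover; summing over the chains gives $|Z\cap[1,N]|\geq cN$ for a positive constant $c$ (for $x=2y$ the chains are $m\to 2m\to 4m\to\cdots$ over odd $m$, and any cover must contain at least every other element of each chain). So a density-zero transversal does not exist; your proposed repair --- thinning $Z$ to a density-zero sub-progression --- leaves a positive density of solution pairs with neither coordinate in $Z$, and your argument then provides no mechanism at all for the Champernowne-type filling on the complement to avoid them (normality does not control the correlation between positions $y$ and $\tfrac{by+c}{a}$, which are not at bounded distance, but neither does anything else in your construction). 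Likewise the claim that the larger coordinate of a solution lies in a ``sparse top part'' of its scale is false: those larger coordinates sweep out a full residue class modulo $b$ inside the scale, of density $1/b$.

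The paper's proof circumvents precisely this obstruction by not deleting a fixed transversal. It organizes $\mathbb{N}$ into the chains above (via nested progressions $H_i=b^i\mathbb{N}+l_{i-1}$ and their differences $B_i$), assigns an independent random bit to the first element (the ``ancestor'') of each chain, and propagates membership along the chain with alternating sign, so that $y\in A$ forces the matched $x$ out of $A$; the excluded positions then form a \emph{random} set of positive density rather than a deterministic one. Normality of the resulting $A_S$ is proved for almost every choice of bits by a second-moment estimate: since chains have length $O(\log N)$, only $O(N\log N)$ of the $N^2$ pairs of windows share an ancestor, whence $E(T_N^2)=O(\log N/N)$ and a Borel--Cantelli argument applies. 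If you want to salvage your write-up, this anticorrelation-along-chains idea (or some equivalent randomization) has to replace the density-zero deletion step; as it stands the proposal does not prove the proposition.
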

\begin{remark}
\textnormal{The proposition is a particular case of Theorem \ref{main_thm_lin}. It is a crucial ingredient in proving
the necessity direction of the theorem in general.}
\end{remark}
\begin{proof}
Let \( S \subset \mathbb{N} \). We construct from \( S \) a new
set \( A_S \) within which the equation \( ax = by + c \) is
unsolvable. 

\noindent 
Without loss of generality, suppose that \( a < b \).
\newline
Assume \( (a,b)=1 \) (the general case follows easily). It follows from \( (a,b) = 1 \) that (\ref{lin_eq_gen}) is solvable. Any solution \( (x,y) \) of
the
equation \( ax = by +c \) has restrictions on \( x \). Namely,  \( x \equiv
\phi(a,b,c) (\!\!\!\! \mod b) \), where \( \phi(a,b,c)\in \{ 0, 1,  \ldots, b-1\} \) is determined
uniquely. Let us denote \( l_0 \doteq  \phi(a,b,c)
\). We define inductively a sequence \( \{l_i\} \subset \mathbb{N} \cup \{0\}\).
If a pair \( (x,y) \) is a solution of
 equation (\ref{lin_eq_gen}) and \( y \in b^i\mathbb{N}+l_{i-1} \) then choose \( l_{i} \in \{0,1,\ldots,b^{i+1}-1\} \) such that
\( x \in b^{i+1} \mathbb{N} + l_i \).
\vspace{0.1in}

\noindent
Note that from \( (a,b)=1 \) it follows that \( (a,b^{i+1}) = 1 \). It is clear that if \( u,v \in \N \) satisfy \( (u,v) = 1 \) then for any \( w \in \Z \) there exists a solution \( (x,y) \in \N^2 \) of the equation \( ux = vy + w \). The latter implies that there exist \( x \in \N , y \in b^i \N + l_{i-1} \) such that \( ax = by + c \). Any such \( x \) should be a member of  \( b^{i+1}\N + l_i \). Note that \( l_i \) and \( l_{i-1} \) are connected by the identity 
\begin{equation}
\label{equiv_ls}
al_i \equiv b l_{i-1} + c \,\,(\!\!\!\! \mod b^{i+1} ).
\end{equation}
In addition, if \( x \in \N \) is given then the equation 
\[
ax \equiv by + c \,\,(\!\!\!\! \mod b^{i+1} )
\]
has at most one solution \( y \in \{0,1,\ldots, b^i-1\}\).
\vspace{0.1in}

\noindent
We define sets \( H_i \doteq b^i \mathbb{N} + l_{i-1} \, ; \, i \in \mathbb{N} \). We prove that for every
\( i \in \mathbb{N}, \, H_{i+1} \subset H_i \). All elements of \( H_{i+1} \) are in the same class
modulo \( b^{i+1} \), therefore all elements of \( H_{i+1} \) are in the same class modulo \( b^i \). So, if we show for some \( x \in H_{i+1} \) that \( x \equiv l_{i-1} ( \!\!\!\! \mod b^{i}) \)
then we are done. For \( i = 1 \) we know that if
\( y \in \mathbb{N}\) then any \( x \in \N \) such that \( (x,y) \) is a solution of the equation (\ref{lin_eq_gen})  has to be in \( H_1 \). Take \( x \in H_2\) such that
there exists \( y \in H_1 \) with  \( ax = by + c \). Then  \( x \in H_1 \). Therefore,
we have shown that \( H_2 \subset H_1 \). For \( i > 1 \) there exists \( x \in H_{i+1} \) such that
there exists \( y \in H_{i} \) with \( ax = by +c \). By induction \( H_{i} \subset H_{i-1} \). Therefore,
the latter \( y \) is in \( H_{i-1} \). Therefore, by construction of \( l_i \)'s we have that
\( x \in H_{i} \). This shows \( H_{i+1} \subset H_i \).
We define sets \( B_i; \, 0 \leq i < \infty \):
\[
  B_0= \mathbb{N} \setminus H_1,
  \]
  \[
  B_1=  H_1 \setminus  H_2
\]
\[
\ldots
\]
\[
  B_i =  H_i \setminus  H_{i+1}
\]
\[
  \ldots
\]
\newline
Clearly we have \( B_i \cap B_j = \emptyset \, \,, \forall
i \neq j \) and \( |\mathbb{N} \setminus (\cup_{i=0}^{\infty} B_i )|  = | \cap_{i=1}^{\infty} H_i | \leq 1 \).
 The latter is because
for every \( i \) the second element (in the increasing order) of \( H_i \) is
\( \geq b^i \).

\noindent
We define \( A_S = \bigcup_{i=0}^{\infty} A_i \), where
\( A_i \)'s are defined in the following manner:
 \[ A_0 \doteq S \cap B_0 , C_0 \doteq  B_0 \setminus A_0\]
\[ D_1 \doteq B_1 \setminus \{x \,|\, ax \in b B_0 + c\},
 A_1 \doteq  \left( B_1 \cap \{x\,|\, ax \in b C_0 + c\} \right) \cup \left( D_1 \cap S \right),\]
 \[
 C_1 \doteq  B_1 \setminus A_1\]
\[ \ldots \]
\[
D_i \doteq B_i \setminus \{x \,|\, ax \in b B_{i-1} + c\}, A_i = \left( B_i \cap
\{x\,|\, ax \in b C_{i-1} + c\} \right) \cup \left(D_i \cap S\right), \]
\[
C_i \doteq  B_i \setminus A_i
\]
\[
\ldots
\]
Here  it is worthwhile to remark that for every \( i, \,\, B_i = A_i \cup C_i\).
Therefore \( A_S \subset \cup_{i=0}^{\infty} B_i \).
\vspace{0.1in}

\noindent
If for some \( i \geq 1 \) we have \( y \in A_i \subset B_i = H_i \setminus H_{i+1} \), then any \( x\) with \( ax = by + c \) satisfies
\[
ax \equiv bl_{i-1}+c \,\,(\!\!\!\! \mod b^{i+1}).
\]
From \( (a,b^{i+1}) = 1 \) it follows that there exists a unique solution \( x \) modulo \( b^{i+1} \). By  identity 
(\ref{equiv_ls}) we have
\[
x \equiv l_i \,\,( \!\!\!\! \mod b^{i+1}).
\]
Thus \( x \in H_{i+1} \). 
\newline
If \( x \in H_{i+2} \), then 
\[
x \equiv l_{i+1} \,\,(\!\!\!\! \mod b^{i+2}).
\] 
Thus we have 
\[
al_{i+1} \equiv by+c \,\,(\!\!\!\! \mod b^{i+2}).
\]
By uniqueness of a solution ( \( y \) ) modulo \( b^{i+1} \) we get 
\[
y \equiv l_i \,\,( \!\!\!\! \mod b^{i+1}).
\]
Thus \( y \in H_{i+1}\). We have a contradiction, which shows that 
 \( x \in H_{i+1} \setminus H_{i+2} = B_{i+1} \).
\newline
The same argument works for \( y \in A_0 \subset B_0 \) and it shows that any \( x \) with 
\( ax = by +c \) satisfies \( x \in B_1\).
\vspace{0.1in}

\noindent
So, if \( y \in A_i \) (\( i \geq 0 \)) then any \( x \) with \( ax = by + c \) should satisfy \( x \in B_{i+1} \). By construction of \( A_S \),  \( x \not \in A_S \).
Thus  equation
(\ref{lin_eq_gen}) is not solvable in $A_S$. 

\noindent
We make the following claim:
\vspace{0.1in}

 \textit{For almost every subset \( S \) of \( \mathbb{N} \) the set
\( A_S \) is a normal set.}
\vspace{0.1in}

\noindent (The probability
measure on subsets of \( \mathbb{N} \) considered here is the product on \( \{0,1\}^{\infty} \) of
probability measures \( (\frac{1}{2},\frac{1}{2})\).)

\noindent
The tool for proving the claim is the following easy lemma (for a proof see Appendix, Lemma
 \ref{norm_lemma}).
\newline
\textit{A subset \( A \) of natural numbers is a normal set  if and only if  for any \(k \in (\mathbb{N} \cup \{0\}) \) and
any \( i_1 < i_2 <  \ldots < i_k \) we have
\begin{equation}
 \lim_{N \rightarrow \infty} \frac{1}{N} \sum_{n=1}^N \chi_A(n) \chi_A(n+i_1) \ldots \chi_A(n+i_k)
 =0,
 \label{eq:main_eq}
\end{equation}
where \( \chi_A(n) \doteq 2 \cdot 1_A(n) - 1\).}
\newline
First of all, we denote  \( T_N = \frac{1}{N} \sum_{n=1}^N
\chi_{A_S}(n) \chi_{A_S}(n+i_1) \ldots \chi_{A_S}(n+i_k) \).
Because of randomness of \( S \), \(T_N \) is a random variable. We will
prove that \( \sum_{N=1}^{\infty} E(T_{N^2}^2) < \infty \) and
this will imply by Lemma   \ref{tec_lemma}  that \( T_N \rightarrow_{N \rightarrow \infty} 0 \)
for almost every \( S \subset \mathbb{N} \).
\newline
\[
E(T_N^2) = \frac{1}{N^2} \sum_{n,m=1}^N E(\chi_{A_S}(n)
\chi_{A_S}(n+i_1) \ldots \chi_{A_S}(n+i_k) \chi_{A_S}(m)
%\chi_{A_S}(m+i_1)
 \ldots \chi_{A_S}(m+i_k)).
\]
Adding (removing) of a finite set to (from) a normal set does not affect the normality of the set. The set  \( \cup_i B_i \) might differ from \( \mathbb{N} \) by at most one element (\(|\cap_{i=1}^{\infty} H_i| \leq 1\)). This possible element  does not affect
the normality of \(A_S \) and we assume without loss of generality that \( \cap_{i=1}^{\infty} H_i = \emptyset\),
thus \( \mathbb{N} = \cup_{i=0}^{\infty} B_i \).
For every number \( n \in \mathbb{N} \) we define the chain of \( n \), \( Ch(n) \), to be the following finite sequence:
\newline
If \( n \in B_0 \), then \( Ch(n) = (n)\).
\newline
If \( n \in B_1 \), then two situations are possible. In the first one there exists a unique
\( y \in B_0 \) such that \( an = by +c \). We set \( Ch(n) = (n,y)=(n,Ch(y))\).
In the second situation we can not find such \( y \) from \( B_0\) and we set \(Ch(n) = (n) \).
\newline
If \( n \in B_{i+1}\), then again two situations are possible. In the first one there exists
\( y \in B_i \) such that \( an = by+c \). In this case we set
\(Ch(n)=(n,Ch(y))\). In the second situation there is no such \( y \) from \(B_i\).
In this case we set \( Ch(n) = (n) \). We define \( l(n)\)
to be the length of \( Ch(n) \).

\noindent For every \( n \in \mathbb{N} \)  we define the \textit{ancestor} of \( n \), \( a(n) \), to be the last
element of the chain of \( n \) (of  \(Ch(n)\)).
To determine  whether or not \( n \in A_S \) will
depend on whether \( a(n) \in S \). The exact relationship  depends on
the \( i \) for which \( n \in B_i \) and on the \( j \) for which \( a(n) \in B_j \) or in other words on the length
of \( Ch(n)\):
\( \chi_{A_S}(n) = (-1)^{i-j} \chi_S(a(n)) = (-1)^{l(n)-1} \chi_S(a(n))\).

\noindent We say that  \( n \) is a descendant of \( a(n) \).

 \noindent It is clear that \( E(\chi_{A_S}(n_1) \ldots
\chi_{A_S}(n_k)) \neq 0 \) (\( E(\chi_{A_S}(n_1) \ldots
\chi_{A_S}(n_k)) \in \{0,1\}\)) if and only if  every number \( a(n_i)
\) occurs an even number of times among numbers \( a(n_1),a(n_2),
\ldots, a(n_k)\).
\newline
We  bound the number of \( n,m\)'s inside the square \([1,N]
\times [1,N]\) such that \( E(\chi_{A_S}(n) \chi_{A_S}(n+i_1)
\ldots \chi_{A_S}(n+i_k) \chi_{A_S}(m) \chi_{A_S}(m+i_1) \ldots
\chi_{A_S}(m+i_k)) \neq 0 \).
\newline
For a given \( n \in [1,N] \) we  count all \( m \)'s inside
\( [1,N] \) such that for the ancestor of \( n \) there will be a chance
to have a twin among the ancestors of  \( n+i_1, \ldots,
n+i_k,m,m+i_1,\ldots,m+i_k \).
\newline
First of all it is obvious that in the interval \( [1,N]\) for a
given ancestor there can be at most \( \log_{\frac{b}{a}} N + C_1\) descendants, where \( C_1 \) is a
constant. For
all but a constant number of \( n \)'s it is impossible that among \( n+i_1,\ldots,n+i_k\)
there is the same ancestor as for \( n \). Therefore we should
focus on ancestors of the set \( \{ m, m+ i_1, \ldots, m+i_k\}\).
For a given \( n \) we might have at most \( (k+1) (\log_{\frac{b}{a}} N +C_1)\)
options for the number \( m \) to provide that  one of the elements
of \( \{ m, m+ i_1, \ldots, m+i_k\}\) has the same
ancestor as \( n \). Therefore for most of \( n \in [1,N] \)
(except maybe a bounded number \( C_2  \) of \( n \)'s which depends
only  on \( \{ i_1,\ldots,i_k \}\) and doesn't depend on \( N \))
we have at most \( (k+1) (\log_{\frac{b}{a}} N +C_1)\) possibilities for \( m \)'s
such that
\[
E(\chi_{A_S}(n) \chi_{A_S}(n+i_1) \ldots \chi_{A_S}(n+i_k)
\chi_{A_S}(m) \chi_{A_S}(m+i_1) \ldots \chi_{A_S}(m+i_k)) \neq 0.
\]
Thus we have
\[
%\sum_{N=1}^{\infty}
E(T_N^2) \leq
%\sum_{N=1}^{\infty}
\frac{1}{N^2} \left( \sum_{n=1}^N (k+1) (\log_{\frac{b}{a}} N +C_1)+ C_2 N \right) =
\frac{1}{N}((k+1)\log_{\frac{b}{a}} N + C_3),
\]
where \( C_3 \) is a constant.
This implies
\[
\sum_{N=1}^{\infty} E(T_{N^2}^2) < \infty.
\]
Therefore \( T_{N^2} \rightarrow_{N \rightarrow \infty} 0 \) for
almost every \( S \subset \mathbb{N} \). By Lemma \ref{tec_lemma} it
follows that \( T_N  \rightarrow_{N \rightarrow
\infty} 0 \) almost surely.
\newline
In the general case, where \( a,b \) are not relatively prime, if \( c \)  satisfies (\ref{lin_eq_gen}) then it should be
 divisible by \( (a,b) \). Therefore by dividing the equation (\ref{lin_eq_gen}) by \( (a,b) \)
 we reduce the problem to the previous case.

\hspace{12cm} \qed
\end{proof}

\noindent
We  use the following notation:
\newline
Let W be a subset of \( \mathbb{Q}^n\). Then for any increasing subsequence  \(I=(i_1,\ldots,i_p) \subset \{1,2,\ldots,n\}\) we define
\[
Proj_{I} W = W_{I}= \{ (w_{i_1},\ldots,w_{i_p}) \, | \, \exists w=(w_1,w_2,\ldots,w_n) \in W \}.
\]
We recall the notion of a cone. 
\begin{definition}
A subset \( W \subset \mathbb{Q}^n \) is called a \textbf{cone} if
\newline
\textnormal{(}a\textnormal{)} \( \forall w_1,w_2 \in W \) we have \( w_1+w_2 \in W \)
\newline
\textnormal{(}b\textnormal{)} \( \forall \alpha \in \mathbb{Q}: \, \alpha \geq 0 \) and \( \forall w \in W \) we have \( \alpha w \in W \). 
\end{definition}
The next step involves an  algebraic statement with a topological proof  which we have to
establish.
\begin{lemma}
\label{3_algebraic_lemma}
Let \( W \) be a non-trivial cone in \(
\mathbb{Q}^n \) which has the property that for every two vectors \(
\vec{a} = \{a_1,a_2,\ldots,a_n\}^t , \vec{b} =
\{b_1,b_2,\ldots,b_n\}^t \in W \)  there exist two coordinates \( 1
\leq i < j \leq n \) (depend on the choice of \(
\vec{a},\vec{b}\)) such that
\[
\det \left( \begin{array}{cc}
a_i & b_i \\
a_j & b_j  \\
\end{array}
\right) = 0.
\]
There exist  two coordinates \( i < j \) such that the
projection of \( W \) on these two coordinates is of dimension \(
\leq 1\) (\( dim_{\mathbb{Q}} \, Span \, Proj_{(i,j)} W \leq 1\)).
\end{lemma}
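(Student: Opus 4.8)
The statement is a quantifier interchange: the hypothesis guarantees that for \emph{every} pair of vectors of \( W \) \emph{some} pair of coordinates kills a \( 2\times 2 \) minor, whereas the conclusion asks for \emph{one} pair of coordinates that works for \emph{all} pairs of vectors. The plan is to force this interchange by a Baire‑category argument, carried out not over \( \mathbb{Q} \) (where category arguments are useless, a countable space being a countable union of points) but over a full‑dimensional real cone manufactured from \( W \).

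First I would fix notation. Let \( V = \operatorname{Span}_{\mathbb{Q}} W \), with \( d = \operatorname{dim}_{\mathbb{Q}} V \ge 1 \) by non‑triviality, and pick \( v_1,\dots,v_d \in W \) forming a \( \mathbb{Q} \)-basis of \( V \). Since \( W \) is a cone, it contains the rational cone \( C = \{\, c_1 v_1 + \dots + c_d v_d \mid c_i \in \mathbb{Q},\ c_i \ge 0 \,\} \). For \( 1 \le i < j \le n \) put \( B_{ij} = \{\, (\vec a,\vec b) \in \mathbb{R}^n \times \mathbb{R}^n \mid a_i b_j - a_j b_i = 0 \,\} \), a closed subset of \( \mathbb{R}^n \times \mathbb{R}^n \) (the zero set of a polynomial). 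Applying the hypothesis of the lemma to pairs drawn from \( C \subseteq W \) gives \( C \times C \subseteq \bigcup_{i<j} B_{ij} \).

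Next I would pass to the reals. Let \( \overline V = \operatorname{Span}_{\mathbb{R}}\{v_1,\dots,v_d\} \), a real subspace of dimension \( d \), and \( \overline C = \{\, c_1 v_1 + \dots + c_d v_d \mid c_i \in \mathbb{R},\ c_i \ge 0 \,\} \); the latter is the closure of \( C \) in \( \mathbb{R}^n \) and, being the image of the positive orthant under the linear isomorphism \( \mathbb{R}^d \to \overline V \), \( e_i \mapsto v_i \), it has non‑empty interior \( U \) relative to \( \overline V \). Since each \( B_{ij} \) is closed, passing to closures in \( C \times C \subseteq \bigcup_{i<j} B_{ij} \) gives \( \overline C \times \overline C \subseteq \bigcup_{i<j} B_{ij} \), so the non‑empty open subset \( U \times U \) of \( \overline V \times \overline V \cong \mathbb{R}^{2d} \) is covered by the finitely many closed sets \( B_{ij} \cap (\overline V \times \overline V) \). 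Each such set is the zero locus on \( \overline V \times \overline V \) of the polynomial \( (\vec a,\vec b) \mapsto a_i b_j - a_j b_i \); in linear coordinates on \( \overline V \) this is the zero locus of a polynomial on \( \mathbb{R}^{2d} \), hence either all of \( \overline V \times \overline V \) or nowhere dense in it. If it were nowhere dense for every pair \( i<j \), then \( U \times U \) would be a finite union of nowhere dense subsets of \( \mathbb{R}^{2d} \), contradicting the Baire category theorem. Therefore \( B_{ij} \supseteq \overline V \times \overline V \) for some pair, i.e. \( a_i b_j = a_j b_i \) for all \( \vec a, \vec b \in \overline V \supseteq W \); thus any two vectors in \( Proj_{(i,j)} W \subseteq \mathbb{Q}^2 \) are proportional, so \( \operatorname{dim}_{\mathbb{Q}} \operatorname{Span}\, Proj_{(i,j)} W \le 1 \), which is the assertion.

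The main obstacle is making the topology bite: the covering \( W \times W \subseteq \bigcup B_{ij} \) is vacuous over \( \mathbb{Q} \), so the real work is the replacement of \( W \) by a genuinely full‑dimensional real cone \( \overline C \subseteq \overline V \) — this is precisely where it matters that \( W \) is a \emph{cone}, not merely a spanning set — together with the rigidity of real polynomials (a non‑zero polynomial has nowhere dense zero locus), which is exactly what upgrades ``some \( B_{ij} \cap (\overline V\times\overline V) \) fails to be nowhere dense'' to ``some \( B_{ij} \) contains all of \( \overline V \times \overline V \)''.
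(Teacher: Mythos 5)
Your proof is correct. It rests on the same underlying principle as the paper's proof --- a set with non-empty interior in a finite-dimensional real space cannot be covered by finitely many proper algebraic subsets --- but the decomposition you use is genuinely different. The paper fixes one non-zero \( \vec{x} \in W \), covers \( W \) itself by the \emph{linear} subspaces \( U_{i,j} = \{\vec{v} \in V \mid Proj_{(i,j)}\vec{v} \in Span\, Proj_{(i,j)}\vec{x}\} \) (invoking the hypothesis only for pairs of the form \( (\vec{x},\vec{v}) \)), and argues that \( W \) has positive volume in \( V = Span\,W \) while a finite union of proper subspaces has volume zero, so some \( U_{i,j} = V \). You instead cover the product \( C \times C \subseteq W \times W \) by the determinantal quadrics \( B_{ij} \) and run the rigidity argument in \( \overline{V}\times\overline{V} \cong \mathbb{R}^{2d} \). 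The paper's route is shorter and uses a formally weaker hypothesis (only pairs through a fixed reference vector), but it is terse precisely where your write-up is careful: first, the passage from the \( \mathbb{Q} \)-cone to a full-dimensional real set (the paper speaks of ``Haar measure'' on \( V \subset \mathbb{Q}^n \) without making the real closure explicit, which is exactly your step from \( C \) to \( \overline{C} \)); second, the degenerate case \( Proj_{(i,j)}\vec{x} = (0,0) \), in which a vanishing determinant does not force \( Proj_{(i,j)}\vec{v} \in Span\, Proj_{(i,j)}\vec{x} \), so the paper's covering claim \( W = \bigcup (W \cap U_{i,j}) \) requires either a suitable choice of \( \vec{x} \) or a small additional remark, whereas your symmetric formulation with \( B_{ij} \) sidesteps this entirely. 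Both arguments are valid; yours is a touch longer but self-contained and rigorous at the points the paper glosses over.
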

\begin{proof}
First of all \( W \) has positive volume  in 
\( V = Span W \) (Volume is Haar measure which normalized by assigning measure one to a unit cube and \( W \) contains a parallelepiped). Fix an arbitrary non-zero element \( \vec{x} \in W \).  For every \( i,j: \, 1 \leq i < j \leq n \) we define
the subspace 
\[ U_{i,j} = \{ \vec{v} \in V \, | \, Proj_{(i,j)} \vec{v} \in Span Proj_{(i,j)} \vec{x}\}.\]
\newline
From the assumptions of the lemma it follows   
that 
\[
W = \bigcup _{i,j;1 \leq  i < j \leq n} (W \cap U_{i,j}).
\]
 For every \(i \neq j \) we obviously have that the volume of \( U_{i,j} \) is either zero or \( U_{i,j} = V \). If we assume that the statement of the lemma does not hold then  \( U_{i,j} \neq V, \, \forall i \neq j \), and thus the volume of   \( U_{i,j},  \, \forall i \neq j \) is zero. We get a contradiction because a finite union of sets with zero volume cannot be equal to a set with positive volume.

%\noindent Let \( \vec{x} \in V \) be an arbitrary non-zero vector. Define for
%any \( 1 \leq i < j \leq n \) the sets
%\[
%V_{\vec{x}}^{i,j} \doteq \{ \vec{y} \in V \, | \, dim_{\mathbb{Q}}
%Proj_{i,j} \left( span_{\mathbb{Q}}\{\vec{x},\vec{y}\} \right) \leq
%1\},
%\]
%where the operator \( Proj_{i,j} \{a_1,a_2,\ldots,a_n\}^t \doteq
%\{a_i,a_j \} \), i.e. a projection of a  \( n \)-dimensional
%vector on the coordinate plane \(
%span_{\mathbb{Q}}\{\vec{e_i},\vec{e_j}\} \).
%\newline
%It is obvious by the definition that for every \( i<j \) the set
%\( V_{\vec{x}}^{i,j} \) is a subspace. By the conditions of the lemma
%it follows that
%\[
%\bigcup_{1 \leq i < j \leq n} V_{\vec{x}}^{i,j} = V.
%\]
%By the easy proposition from linear algebra, that a vector space over an
%infinite field can be represented as a finite union of its
%subspaces if and only if one of subspaces is a whole space, it
%follows that there exist \( i < j \) such that \( V =
%V_{\vec{x}}^{i,j} \). Therefore we obtain that for these \( i,j \)
%we have
%\[
%dim_{\mathbb{Q}} Proj_{i,j} V \leq 1.
%\]

\hspace{12cm} \qed
\end{proof}
%%%%%%%%%%%%%%%%%%%%%%%%%%%%%%%%%%%%%%%%%%
%%%%%%%%%%%%%%%%%%%%%%%%%%%%%%%%%%%%%%%%%

\noindent
\begin{proof}\textit{(of Theorem \ref{main_thm_lin}, \(\Rrightarrow\))}
\newline
Assume that an affine subspace $\mathbb{A}$ of $\mathbb{Q}^k$ intersects $A^k$ for any  WM set $A \subset \mathbb{N}$.
\newline
First of all, we shift the affine space  to obtain a vector subspace, denote it by \( U \). The
linear space \( U \) must contain vectors with all positive
coordinates, since $\mathbb{A} \cap A^k$ must be infinite.

\noindent  Denote by \( W = \{
\vec{v} \in U \, | \, \langle\vec{v},\vec{e_i}\rangle \geq 0 \, , \,
\forall \, i: \, 1 \leq i \leq k \} \). \( W \) is a non-trivial cone. 
\vspace{0.1in}
%By excluding all coordinates \( i : \, 1 \leq i \leq n \) for which we have \(Proj_i W =\{0\}\) we can assume that for every \(i: \, 1 \leq i \leq n \) we have 
%\( Proj_i W \neq \{0\}\). 

\noindent
Assume that for every \( \vec{a}= (a_1,\ldots, a_k)^t, \vec{b} = (b_1,\ldots,b_k)^t \in W \) we have that \( \exists i,j: \,\, 1 \leq i < j \leq k \) such that 
\[
\det \left( \begin{array}{cc}
                    a_i & b_i \\
                    a_j & b_j  \\
            \end{array}
    \right) = 0.
\]
Then by Lemma \ref{3_algebraic_lemma} we deduce that 
there exist maximal subsets of coordinates \( F_1,\ldots,F_l \) (one of them, assume \( F_1 \), should have at least two coordinates) such that for every \( r \in
\{1,2\ldots,l\}\) we have  \(
V_{F_r} \doteq Span W_{F_r} \) is one dimensional.

 \noindent We fix \( r \, : \, 1 \leq r \leq l \). We  show that the projection on \( F_r \) of \( W + \vec{f} \) is on a  diagonal, where \( \vec{f} \in \Z^k \) is such that  \( U +\vec{f} = \mathbb{A}\). If the projection of \( W \) on
\( F_r \) is not on a diagonal then there exist two
coordinates \( i < j \) from \( F_r \) such that \( W_{(i,j)} =
\{(ax,bx) \, | \, x \in \mathbb{N}\}\) for some \( a \neq b \)
natural numbers. Therefore the projection of $\mathbb{A}$ on \((i,j)\) has the form \(\{(ax+f_1,bx+f_2) \, | \, x
\in \mathbb{N}\} \), where \( f_1,f_2 \) are integers. From
Proposition \ref{3_constr_prop} it follows that for any \( a,b,c \),
where \( a \neq b \), there exists a WM set \( A \) (even a normal set) such that the
equation \( ax = by + c \) is not solvable within \( A \). This
proves the existence of a WM set \( A_0 \) such that for every \( x
\in \mathbb{Z} \) we have \( (ax+f_1,bx+f_2) \not \in A_0^2 \) (introduce the new variables \( z_1,z_2 \) by \( (z_1,z_2)= (ax_1+f_1,bx+f_2) \) and take a normal set \( A_0 \) such that the equation \( az_2 = bz_1 + (af_2-bf_1) \) is unsolvable within \(A_0\)).

\noindent
Thus \( \forall i,j \in F_r : \,\, W_{(i,j)} = \{(ax,ax) \, | \, x \in \N\}\). 
\vspace{0.1in}

\noindent
 To prove that a shift is the same for all coordinates in \( F_r \) 
 we merely should
know that for any natural number \( c \) there exists a WM set \(
A_c \) such that inside \( A_c \) the equation \( x - y = c \) is
not solvable. The last statement is easy to verify.
\vspace{0.1in}

\noindent
Let \( j_r \in F_r \), \( \forall 1 \leq r \leq l \). Denote  \( I = (j_1,\ldots,j_l)\). We have proved that there exist \( g_1,\ldots,g_l \in \N \), \( c_1,\ldots,c_l \in \Z \) such that 
\[
(U + \vec{f})_I = \{ (g_1x_1 + c_1, \ldots, g_l x_l + c_l) \, | \, x_1,\ldots,x_l \in \mathbb{Q} \}. 
\]
It is clear that we can find \( \vec{a}, \vec{b} \) which satisfy all the requirements of Theorem \ref{main_thm_lin}.
This completes the proof.

\hspace{12cm} \qed

\end{proof}

\noindent
\begin{remark}
We have proved that if  an affine subspace $\mathbb{A} \subset \mathbb{Q}^k$ intersects $A^k$ for any normal set $A \subset \mathbb{N}$, then there exist \( \vec{a},\vec{b} \in \N^k \) and a partition \( F_1,\ldots,F_l \) of \( \{1,2,\ldots,k\}\) such that:
\newline
\textnormal{(a)} \( \forall r: \, 1 \leq r \leq l \) and \( \forall i \in F_r, \forall j \not \in F_r \) we have 
\[
\det \left( \begin{array}{cc}
                    a_i & b_i \\
                    a_j & b_j  \\
            \end{array}
    \right) \neq 0.
\] 
\textnormal{(b)} \( \exists \vec{f} \in \Z^k \) such that the set \(\{ n\vec{a} + m \vec{b} + \vec{f} \, | \, n,m \in \N \} \) is in $\mathbb{A}$.  

\noindent Thus, we have proved the direction ``\(\Rrightarrow\)" of Theorem \ref{normal_theorem}.
\end{remark}

\section{ Comparison with Rado's Theorem}\label{sub_sect_Rado}

We recall that the problem of solvability of a system of linear
equations in one cell of any finite partition of \( \mathbb{N} \) was solved
by Rado in  \cite{rado}. Such systems of linear equations
are called partition-regular.  We show that partition-regular
systems are  solvable within every WM set by use of Theorem \ref{main_thm_lin}. 
It is important to note that 
solvability of partition-regular linear systems of equations within WM sets can be 
shown directly (without use of Theorem \ref{main_thm_lin}) by use of the
technique of Furstenberg and Weiss
%(????????????????????????)
that was developed in their dynamical proof of Rado's theorem (see
\cite{furst3}). 
%Instead of doing so, we obtain this result by use of
%theorem \ref{main_thm_lin}.
\newline
First of all we  describe Rado's regular systems. 
\begin{definition}
A rational \( p \times q \) matrix \( (a_{ij}) \) is said to be of
level \( l \) if the index set \( \{1,2,\ldots,q\}\) can be divided
into \( l \) disjoint subsets \( I_1,I_2,\ldots,I_l \) and rational
numbers \( c_j^r \) may be found for \( 1 \leq r \leq l \) and \( 1
\leq j \leq q \) such that the following relationships are
satisfied:
\[
\sum_{j \in I_1} a_{ij} = 0
\]
\[
\sum_{j \in I_2} a_{ij} = \sum_{j \in I_1} c_j^1 a_{ij}
\]
\[
\ldots
\]
\[
\sum_{j \in I_l} a_{ij} = \sum_{j \in I_1 \cup I_2 \cup \ldots \cup
I_{l-1}} c_j^{l-1} a_{ij}
\]
for \( i=1,2,\ldots,p\).
\end{definition}
\begin{theorem}\textit{(Rado)}
A system of linear equations is partition-regular if and only if  for some \(
l \) the matrix \( (a_{ij})\) is of level \( l \) and it is
homogeneous, i.e. a system of the form
\[
\sum_{j=1}^q a_{ij}x_j = 0, \hspace{0.5 in} i=1,2,\ldots,p.
\]
\end{theorem}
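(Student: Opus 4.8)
The plan is to prove both implications, with the ``columns condition''---precisely what is called here being \emph{of level $l$}---as the hinge. Write the columns of the $p\times q$ matrix $A=(a_{ij})$ as $\vec c_1,\dots,\vec c_q\in\mathbb{Q}^{p}$; being of level $l$ says that $\{1,\dots,q\}=I_1\cup\cdots\cup I_l$ with $\sum_{j\in I_1}\vec c_j=\vec 0$ and, for $2\le s\le l$, $\sum_{j\in I_s}\vec c_j$ lying in the $\mathbb{Q}$-span of $\{\vec c_j:j\in I_1\cup\cdots\cup I_{s-1}\}$, the coefficients there being the $c_j^{\,s-1}$. Alongside this I will use the classical fact that no genuinely inhomogeneous linear system is partition regular, which takes care of the ``homogeneous'' clause; it then remains to match ``partition regular'' with ``of level $l$'' for homogeneous $A$.

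For \textbf{necessity}, assume $Ax=0$ is partition regular and, after clearing denominators so that $A$ is integral, fix a prime $p$ exceeding $q\cdot\max_{i,j}|a_{ij}|$ and also large enough to reconstruct the $c_j^{\,r}$ below. Colour $n\in\mathbb{N}$ by its base-$p$ digit data around its lowest non-zero digit: the value $d(n)\in\{1,\dots,p-1\}$ and position $t(n)$ of that digit, plus a bounded number of the following digits---finitely many colours. Given a monochromatic solution $x_1,\dots,x_q$, let $I_1$ collect the indices attaining the minimal position $t$; reducing each equation modulo $p^{t+1}$ and using the common lowest digit $d\ne 0$ gives $d\sum_{j\in I_1}a_{ij}\equiv 0\pmod p$, hence $\sum_{j\in I_1}a_{ij}=0$ by the size of $p$. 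Peeling $I_1$ off and repeating at the next-smallest digit position produces $I_2$ together with an identity expressing $\sum_{j\in I_2}\vec c_j$ as a rational combination of the earlier columns, the coefficients read from the recorded intermediate digits; after at most $q$ rounds this realises $A$ as of level $l$ for some $l\le q$. (The one subtlety is lifting the mod-$p$ relations to genuine rational relations, done either by a linear-algebra argument or by letting $p$ run through primes.)

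For \textbf{sufficiency}, assume $A$ is homogeneous and of level $l$. The tool is Deuber's theorem that for all $m,p,c$ the family of $(m,p,c)$-sets $\bigl\{\sum_{i=1}^{m}\lambda_i y_i:\lambda_i\in\{-p,\dots,p\},\ \lambda_{i_0}=c\text{ at the least }i_0\text{ with }\lambda_{i_0}\ne 0\bigr\}$ is partition regular---itself deduced from van der Waerden's theorem by a finite induction. One then checks that a homogeneous level-$l$ system has a solution inside every $(m,p,c)$-set once $m\ge l$, $c$ is a common denominator of the $c_j^{\,r}$, and $p$ dominates all the resulting integer coefficients: proceeding backwards from $I_l$ to $I_1$, assign to the variables in $I_s$ linear combinations of the generators $y_1,\dots,y_l$ governed by the relation $\sum_{j\in I_s}\vec c_j=\sum c_j^{\,s-1}\vec c_j$, so that in $\sum_{i,j}a_{ij}x_j$ the coefficient of each $y_t$ telescopes to zero. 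Taking the $(m,p,c)$-set monochromatic yields a monochromatic solution.

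The \textbf{main obstacle} is the sufficiency side: Deuber's theorem (the induction reducing $(m,p,c)$-partition-regularity to van der Waerden) and, above it, the bookkeeping that converts the level-$l$ relations into an explicit solution with every coefficient honestly bounded by $p$ and with the prescribed leading value $c$; the rest is a colouring argument and a short reduction. For the present paper the Deuber machinery can be bypassed: given the level-$l$ decomposition, the solution space of a homogeneous level-$l$ system contains a two-parameter family $\{n\vec a+m\vec b+\vec f\}$, obtained by restricting the general Rado solution to a generic two-dimensional sub-family---genericity forcing all cross-block $2\times2$ minors to be non-zero, with the blocks $F_1,\dots,F_l$ a refinement of $I_1,\dots,I_l$---so Theorem \ref{main_thm_lin} applies and yields solvability in every WM set, which is Proposition \ref{Rado_analog}.
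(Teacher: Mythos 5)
The paper offers no proof of this statement: it is Rado's 1943 theorem, quoted verbatim with a citation to \cite{rado} and used as a black box, so there is nothing in the paper to compare your argument against. Judged on its own, your outline is the standard modern proof and is sound in structure: necessity via the base-$p$ colouring by the lowest non-zero digit, sufficiency via Deuber's $(m,p,c)$-sets (or, equivalently, Rado's original induction on the level $l$ using van der Waerden). But as written it is a plan rather than a proof. The two genuinely hard ingredients are only named, not supplied: the partition regularity of $(m,p,c)$-sets is the substantive half of the theorem and is invoked wholesale, and the lift of the mod-$p$ span relations to $\mathbb{Q}$ is waved at in a parenthesis (the clean route is: for each large prime the colouring yields some ordered partition satisfying the columns condition mod $p$; there are finitely many partitions, so one works for infinitely many primes; and a vector outside a rational span stays outside the reduction mod $p$ for all but finitely many $p$). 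Also, ``no genuinely inhomogeneous linear system is partition regular'' is false without a caveat: $2x=y+1$ is partition regular because it has the constant solution $x=y=1$; the correct dichotomy for $Ax=b$, $b\neq 0$, involves exactly this constant-solution exception, and the theorem as stated in the paper silently restricts to homogeneous systems.

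Your closing remark also conflates two different statements. Theorem \ref{main_thm_lin} yields solvability in every WM set, which is what Proposition \ref{Rado_analog} asserts; it cannot substitute for the sufficiency half of Rado's theorem, because solvability in every WM set does not imply partition regularity --- the cells of a partition of $\mathbb{N}$ need not be WM sets (the even numbers, for instance, are not). Note too that the paper's own proof of Proposition \ref{Rado_analog} does not pass through the level-$l$ condition at all: it extracts the needed structure of the solution space $V$ directly from partition regularity by an explicit two-colouring along geometric progressions, and then applies Theorem \ref{main_thm_lin}. So the shortcut you describe proves Proposition \ref{Rado_analog}, not the theorem under discussion.
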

The following claim is the main result of this section.
\begin{prop}
\label{Rado_analog} A partition-regular system is solvable in every WM set.
\end{prop}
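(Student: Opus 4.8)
The plan is to reduce the statement to Theorem \ref{main_thm_lin} by exhibiting, for any level-$l$ homogeneous Rado system, an explicit two-parameter family of solutions of the form $\{n\vec a + m\vec b + \vec f \mid n,m \in \mathbb N\}$ meeting the hypotheses (a) and (b) of Theorem \ref{main_thm_lin}. In fact, since Rado systems are homogeneous, I expect to be able to take $\vec f = \vec 0$, so the trivial partition $F_1 = \{1,\ldots,q\}$ together with $c_r = 0$ handles condition b) automatically; the real content is producing $\vec a,\vec b \in \mathbb N^q$ that solve the system and whose coordinatewise $2\times 2$ minors vanish/don't-vanish in the pattern prescribed by condition a).

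First I would unwind Rado's level-$l$ condition. Writing the columns of $(a_{ij})$ as $C_1,\ldots,C_q$ and recalling the partition $\{1,\ldots,q\} = I_1 \cup \cdots \cup I_l$ with coefficients $c_j^r$, the defining relations say precisely that $\sum_{j\in I_1} C_j = 0$ and, for each $s \ge 2$, $\sum_{j\in I_s} C_j = \sum_{j \in I_1\cup\cdots\cup I_{s-1}} c_j^{s-1} C_j$. The standard way to read this is that a solution vector $x$ is obtained by assigning a common value to all coordinates in a block and letting the later blocks pick up corrections; concretely one builds, by downward induction on the block index, values $x_j$ that are eventually large integers, with the coordinates in the ``last'' block $I_l$ set equal to a free parameter, those in $I_{l-1}$ equal to another free parameter plus a fixed $\mathbb Q$-linear combination of the $I_l$-values, and so on. After clearing denominators this produces, for each choice of the $l$ free parameters in $\mathbb N$, a genuine integer solution with all coordinates positive (positivity of all coordinates is arranged by taking the free parameters large in the right order — this is exactly the mechanism in the dynamical proof of Rado's theorem by Furstenberg and Weiss referenced in the text).

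To feed this into Theorem \ref{main_thm_lin} I only need two such parameter vectors, not all $l$: specialize $l-2$ of the free parameters (say to suitable fixed large values that keep everything positive) and keep two of them, call them $n$ and $m$. This yields a map $(n,m) \mapsto n\vec a + m\vec b + \vec c$ into the solution space with $\vec a,\vec b \in \mathbb N^q$ and $\vec c \in \mathbb Z^q$; by homogeneity one can in fact absorb $\vec c$ by noting that if $(n,m)\mapsto n\vec a+m\vec b+\vec c$ are all solutions then so is $\vec c$ itself and hence (scaling) the whole ray, so after a further harmless adjustment the family has the form $\{n\vec a+m\vec b\}$. Now I must check condition a): for the trivial partition $F_1=\{1,\ldots,q\}$, $c_{1,1},c_{2,1}$ must be chosen so that $a_i=c_{1,1},b_i=c_{2,1}$ for \emph{every} $i$ — which forces all coordinates of $\vec a$ equal and all of $\vec b$ equal, i.e. the diagonal. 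That is generally false for a Rado solution, so instead I take the partition $F_1,\ldots,F_l$ to be (a refinement of) Rado's block partition $I_1,\ldots,I_l$: within each block the construction gives coordinates of $\vec a$ (and of $\vec b$) that are constant, while across blocks they differ; if two blocks happen to give proportional $(a,b)$-pairs I merge them. The remaining point is the non-vanishing of the cross-block determinants $\det\begin{pmatrix} a_j & b_j\\ c_{1,r}&c_{2,r}\end{pmatrix}$, which can be secured by choosing the two retained free parameters $n,m$ generically (the bad set is a proper subvariety), exactly as genericity is used elsewhere in the paper. I expect this determinant-genericity bookkeeping — making sure the two-parameter specialization simultaneously keeps all coordinates positive \emph{and} makes all the required minors nonzero while the others stay zero — to be the main obstacle; everything else is a transcription of Rado's level condition into the language of Theorem \ref{main_thm_lin}. \qed
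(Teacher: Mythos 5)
Your reduction to Theorem \ref{main_thm_lin} is the right idea --- that is exactly what the paper does --- but the way you produce the data $(\vec a,\vec b,F_1,\dots,F_l)$ has a genuine gap, visible already for the Schur equation $x+y=z$. There the columns condition gives the blocks $I_1=\{x,z\}$, $I_2=\{y\}$, and your claim that ``within each block the construction gives coordinates of $\vec a$ (and of $\vec b$) that are constant'' fails: in any two-parameter family of positive solutions the coordinates $x$ and $z$ cannot carry the same pair $(a_i,b_i)$, since that (together with equal shifts) would force $x=z$ and hence $y=0\notin\mathbb N$. In the $(m,p,c)$-type parametrisation $x_j=c\,n_s+\sum_{t>s}\lambda_{j,t}n_t$ for $j\in I_s$, coordinates in one Rado block share only the leading coefficient, not the whole linear form, so Rado's blocks are not the partition Theorem \ref{main_thm_lin} needs. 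The correct partition --- the one the paper uses --- is defined intrinsically from the solution space $V$: put $i\sim j$ when the positive part of $Proj_{(i,j)}V$ spans a line.

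The second, more serious gap is the appeal to genericity for the cross-block determinants. A generic choice of the two retained parameters makes such a determinant nonzero only if the two coordinate forms are not proportional as linear functionals on the parameter space; if the solution space forces $x_i=\alpha x_j$ with $\alpha\neq 0,1$, the ``bad set'' is everything and no choice works --- and then no admissible $\vec a,\vec b$ exist at all, since putting $i$ and $j$ in one block would force $a_i=a_j=0$. Ruling out such forced non-diagonal proportionalities is precisely where partition regularity (as opposed to the mere columns condition) must enter, and your argument never uses it. The paper handles this with a short combinatorial step: if all positive solutions had $x_i=n x_j$ with $n\neq1$, one could two-colour $\mathbb N$ along the geometric progressions $\{an^m\}$ so that no monochromatic pair lies on that line, contradicting partition regularity; hence every one-dimensional positive projection is the diagonal, and a generic positive choice of $\vec a,\vec b\in V$ then satisfies all the determinant conditions. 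Supplying that step (or an equivalent derivation from the columns condition, which would amount to reproving part of Rado's theorem) is what your proposal is missing; the positivity of all coordinates of $\vec a$ and $\vec b$ after specialisation also needs to be argued rather than asserted.
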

\begin{proof}
Let a system \( \sum_{j=1}^q a_{ij}x_j = 0, i=1,2,\ldots,p\) be
partition-regular. We will use the fact that the system is solvable for any
finite partition of \( \mathbb{N} \). First of all, the set of
solutions of a partition-regular system is a subspace of \( \mathbb{Q}^q \);
denote it by \( V \). It is obvious that \( V \) contains
vectors with all positive components. If for some \( 1 \leq i < j
\leq q \) we have \( Proj_{i,j}^{+} V \) (where \( Proj_{i,j}^{+} V
= \{(x,y)|x,y \geq 0 \hspace{0.1 in} \& \hspace{0.1 in} \exists
\vec{v} \in V: \, <\vec{v},\vec{e_i}>=x \, , \,
<\vec{v},\vec{e_j}>=y \} \)) is contained in a line, then \(
Proj_{i,j}^{+} V \) is diagonal, i.e. it is contained in \( \{(x,x)| x
\in \mathbb{Q}\} \). Otherwise, we can generate a partition of \(
\mathbb{N} \) into two disjoint sets \( S_1,S_2 \)  such that no \(
S_1^q \) and no \( S_2^q \) intersects \( V \):

\noindent  This partition is constructed by an iterative process.
Without loss of generality we may assume that the line is \( x = n y
\), where \( n \in \mathbb{N} \). The general case is treated in the
simillar way. We start with \( S_1 = S_2 = \emptyset \). Let \( 1 \in
S_1 \).
\newline
We ``color" the infinite geometric progression \( \{ n^m \, | \, m \in \mathbb{N} \} \) (adding elements
 to either \( S_1\) or
\( S_2 \)) in such way that there is no
\( (x,y) \) on the line from \( S_1^2,S_2^2 \).
Then we take a minimal element from \( \mathbb{N} \) which is still uncolored. Call it \( a \).
Add \( a \) to \( S_1 \). Next, ``color" \( \{ a n^m \, | \, m \in \mathbb{N} \} \).
\newline
Continuing in this fashion, we obtain the desired partition of $\mathbb{N}$. 

\noindent This contradicts the assumption that the given system is
partition-regular.
\newline
Let \( F_1,\ldots,F_l \) be a partition of \( \{1,2,\ldots,k\}\)  such that for every \( r \in \{1,\ldots,l\} \)
we have for every \( i \neq j \, , \, i,j \in F_r: \, \dim_{\mathbb{Q}} Span(Proj_{i,j}^{+} V) = 1 \), and for every \( r: \, 1 \leq r \leq l \), every \( i \in F_r \) .and for every \( j \not \in F_r \) we have \( \dim_{\mathbb{Q}} Span(Proj_{i,j}^{+} V) = 2\). 
 For every \( r: 1 \leq r \leq l \)  we choose
arbitrarily one representative index within \( F_r \) and denote it by \( j_r \) (\(j_r \in F_r \)).

\noindent 
Then there exist \( g_1,\ldots, g_l \in \N \) such that 
\[
V_I = \{ (g_1x_1,\ldots, g_l x_l) \, | \, x_1,\ldots,x_l \in \mathbb{Q} \}.
\]
The latter ensures that there exist  vectors \( \vec{a},\vec{b} \in V\) which satisfy all the requirements of Theorem
\ref{main_thm_lin} and, therefore, the system is solvable in every WM set.

\hspace{12cm} \qed
\end{proof}

\section{Appendix}\label{appendix}
\numberwithin{lemma}{section} \numberwithin{theorem}{section}
\numberwithin{prop}{section} \numberwithin{remark}{section}
\numberwithin{definition}{section} \numberwithin{corollary}{section}

\noindent In this section we prove all technical lemmas and
propositions that were used in the paper.
\newline
We start with the key lemma which is a finite modification of
Bergelson's lemma in \cite{berg_pet}. Its origin is in a lemma of
van der Corput.
\begin{lemma}
%\( \rm( \)\textit{van der Corput}\( \rm) \)
\label{vdrCorput}
 Suppose \(\varepsilon
>0 \) and \( \{u_{j}\}_{j=1}^{\infty } \) is a family
of vectors in  Hilbert space, such that \( \Vert u_j \Vert \leq 1 \,
\rm( 1 \leq j \leq \infty \rm)  \). Then there exists \(
I'(\varepsilon) \in \mathbb{N} \), such that for every \( I \geq
I'(\varepsilon) \) there exists \( J'(I,\varepsilon) \in \mathbb{N}
\), such that the following holds:
\newline
For \( J \geq J'(I,\varepsilon) \) for which we obtain
\[
\left| \frac{1}{J}\sum ^{J}_{j=1}\langle u_{j}, u_{j+i}\rangle\right| <
\frac{\varepsilon}{2}
 \]  
 for a set of \( i \)'s in the interval \(
\{1,\ldots,I\} \) of density \( 1 - \frac{\varepsilon}{3}\) we have
\[
\left\Vert \frac{1}{J}\sum _{j=1}^{J}u_{j}\right\Vert < \varepsilon.
\]
\end{lemma}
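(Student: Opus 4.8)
The plan is to prove the van der Corput lemma (Lemma \ref{vdrCorput}) by expanding the square norm of the average and using the hypothesis on the inner products to bound the resulting double sum. First I would write
\[
\left\Vert \frac{1}{J}\sum_{j=1}^J u_j \right\Vert^2 = \frac{1}{J^2} \sum_{j,k=1}^J \langle u_j, u_k \rangle,
\]
and reorganize the double sum by the difference $i = k - j$. Introducing the averaged quantity $\gamma_i = \frac{1}{J}\sum_{j=1}^{J} \langle u_j, u_{j+i} \rangle$ (with the convention that terms with index out of range are dropped, which costs only $O(|i|/J)$), the expression becomes, up to an error that vanishes as $J \to \infty$ for each fixed window of $i$'s, an average of $\gamma_i$ over $|i| \le I$. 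The standard trick is to further average over a block: for any fixed $I$, one has
\[
\frac{1}{J}\sum_{j=1}^J u_j \approx \frac{1}{J}\sum_{j=1}^J \left( \frac{1}{I}\sum_{i=1}^{I} u_{j+i} \right),
\]
again with an error $O(I/J)$, so taking $J$ large relative to $I$ makes this negligible.

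Next I would apply Cauchy--Schwarz (the convexity inequality $\Vert \text{average} \Vert^2 \le \text{average of } \Vert \cdot \Vert^2$) to pull the square inside:
\[
\left\Vert \frac{1}{J}\sum_{j=1}^J \frac{1}{I}\sum_{i=1}^I u_{j+i} \right\Vert^2 \le \frac{1}{J}\sum_{j=1}^J \left\Vert \frac{1}{I}\sum_{i=1}^I u_{j+i} \right\Vert^2 = \frac{1}{I^2}\sum_{i,i'=1}^I \frac{1}{J}\sum_{j=1}^J \langle u_{j+i}, u_{j+i'} \rangle.
\]
The inner average over $j$ is, up to a vanishing error in $J$, equal to $\gamma_{i'-i}$. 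So the whole quantity is controlled by $\frac{1}{I^2}\sum_{i,i'=1}^I |\gamma_{i-i'}|$. Now I split the pairs $(i,i')$ according to whether $|i-i'|$ lies in the ``good'' set of density $1 - \frac{\varepsilon}{3}$ where $|\gamma_{i-i'}| < \frac{\varepsilon}{2}$, or in the complementary ``bad'' set; for good differences each term contributes less than $\frac{\varepsilon}{2}$, and the bad differences — including $i=i'$ — make up at most a $\frac{\varepsilon}{3}$-fraction of all pairs (for $I$ large, since the bad set has upper density $\le \frac{\varepsilon}{3}$ and we must also absorb boundary effects), each contributing at most $\Vert u_{j+i}\Vert \Vert u_{j+i'}\Vert \le 1$. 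This gives a bound of roughly $\frac{\varepsilon}{2} + \frac{\varepsilon}{3} < \varepsilon$ on the square norm — one should be slightly more careful with constants, choosing $I'(\varepsilon)$ large enough that the density of the bad set plus the $\frac{1}{I}$ diagonal contribution is below, say, $\frac{\varepsilon}{2}$, so that the total is below $\varepsilon$ (and then the norm itself is below $\sqrt{\varepsilon}$; to get the norm below $\varepsilon$ exactly one simply runs the argument with $\varepsilon^2$ in place of $\varepsilon$ in the internal estimates, or adjusts the statement's bookkeeping — the paper's phrasing already builds in this slack via the separate thresholds $\frac{\varepsilon}{2}, \frac{\varepsilon}{3}$).

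The order of quantifiers is important and I would set it up carefully: given $\varepsilon$, first choose $I'(\varepsilon)$ so that for all $I \ge I'(\varepsilon)$ the combinatorial count of bad pairs plus diagonal is small; then, given such an $I$, choose $J'(I,\varepsilon)$ so large that all the error terms of the form $O(I/J)$ and the discrepancies between $\frac{1}{J}\sum_j \langle u_{j+i},u_{j+i'}\rangle$ and $\gamma_{i-i'}$ (there are at most $I^2$ of them, each bounded uniformly) are together below the remaining budget. The main obstacle, such as it is, is purely the bookkeeping: making sure the two independent sources of smallness — the analytic errors controlled by taking $J$ large, and the combinatorial fraction of bad differences controlled by taking $I$ large — are threaded through in the right order and that the diagonal terms $i=i'$ (which are genuinely of size $1$, not small) are swept into the bad-pair count. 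There is no deep idea beyond the classical van der Corput maneuver; the finite quantitative form is what requires attention.
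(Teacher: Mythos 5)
Your proposal is correct and follows essentially the same route as the paper: the shift-averaging (van der Corput) trick, Cauchy--Schwarz to pass to $\frac{1}{I^2}\sum_{i,i'}\gamma_{i-i'}$, and a split into the diagonal, good differences bounded by $\frac{\varepsilon}{2}$, and a $\frac{\varepsilon}{3}$-density set of bad differences, with $I$ chosen first and then $J$ large relative to $I$. The loose ends you flag (the factor-of-two in counting pairs with a given $|i-i'|$, and the fact that the argument naturally bounds the \emph{square} of the norm by $\approx\varepsilon$ rather than the norm itself) are present in the paper's own write-up as well and are absorbed the same way, by adjusting the internal constants.
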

%%%%%%%%%%%%%%%%%%%%%%%%%%%%%%%%% Proof %%%%%%%%%%%%%%%%%%%%%%%%%%%%%
\begin{proof}
For an arbitrary \( J \) define \( u_{k}=0 \) for every \(
\textrm{k}<1 \) or \( k>J \). The following is an elementary
identity:
\[
%\label{form1}
\sum ^{I}_{i=1}\sum ^{J+I}_{j=1}u_{j-i}=I\sum
^{J}_{j=1}u_{j}.
\]
Therefore,  the inequality \( \left\Vert \sum
_{i=1}^{N}u_{i}\right\Vert ^{2}\leq N\sum _{i=1}^{N}\left\Vert
u_{i}\right\Vert ^{2} \) yields
 \[
\left\Vert I\sum ^{J}_{j=1}u_{j}\right\Vert ^{2}\leq (J+I)\sum
^{J+I}_{j=1}\left\Vert \sum ^{I}_{i=1}u_{j-i}\right\Vert ^{2}=
(J+I)\sum ^{J+I}_{j=1}\langle \sum ^{I}_{p=1}u_{j-p},\sum
^{I}_{s=1}u_{j-s}\rangle\]
 \[
=(J+I)\sum ^{J+I}_{j=1}\sum ^{I}_{p=1}\left\Vert u_{j-p}\right\Vert
^{2}+2(J+I)\sum ^{J+I}_{j=1}\sum
^{I}_{r,s=1;s<r}\langle u_{j-r},u_{j-s}\rangle=
 (J+I)(\Sigma _{1}+2\Sigma _{2}),
\]
where \( \Sigma _{1}=I\sum ^{J}_{j=1}\left\Vert u_{j}\right\Vert
^{2} \) by  the aforementioned elementary identity and
 \(
\Sigma _{2}=\sum ^{I-1}_{h=1}(I-h)\sum ^{J}_{j=1}\langle u_{j},u_{j+h} \rangle \).
The last expression is obtained by rewriting \( \Sigma_{2} \), where
\( h=r-s \). By dividing the foregoing inequality  by \( I^{2}J^{2}
\) we obtain
 \[
\left\Vert \frac{1}{J}\sum ^{J}_{j=1}u_{j}\right\Vert
^{2}<\frac{J+I}{IJ}+\frac{J+I}{J}\left( \frac{\varepsilon}{2} +
\frac{\varepsilon}{3} \right) = \frac{J+I}{J}\left( \frac{1}{I} +
\frac{5 \varepsilon}{6} \right).
\]
Choose \( I'(\varepsilon) \in \mathbb{N} \), such that
\(\frac{12}{\varepsilon}
 \leq I'(\varepsilon) \leq \frac{12}{\varepsilon}+1\). Then for
every \( I \geq I'(\varepsilon) \) we have \(  \frac{1}{I} + \frac{5
\varepsilon}{6} \leq \frac{11\varepsilon}{12}\). There exists \(
J'(I,\varepsilon) \in \mathbb{N} \), such that for every \( J \geq
J'(I,\varepsilon) \):  \( \frac{J+I}{J} < \frac{12}{11} \).
As a result, for every \( I \geq I'(\varepsilon) \) there exists \(
 J'(I,\varepsilon) \), such that for every \( J \geq
 J'(I,\varepsilon) \)
\[
\left\Vert \frac{1}{J}\sum ^{J}_{j=1}u_{j}\right\Vert ^{2} <
\varepsilon.
\]

\hspace{12cm} \qed
\end{proof}

\noindent The next proposition was used  in Section
\ref{lin_proof_suff}.
\begin{prop}
\label{wm_prop}
 Let \( A \subset \mathbb{N} \) be a WM-set. Then
for every integer \( a>0 \) and every integers \( b_1,b_2,\ldots,b_k
\) 
\begin{displaymath}
 \lim_{N \rightarrow \infty} \frac{1}{N}
\sum_{n=1}^N \xi(n+b_1)\xi(n+b_2) \ldots \xi(n+b_k) =
\end{displaymath}
\[
%\label{main_eq}
 \lim_{N \rightarrow \infty} \frac{1}{N} \sum_{n=1}^N
\xi(an+b_1)\xi(an+b_2) \ldots \xi(an+b_k),
\]
where \( \xi \doteq 1_A - \dd(A) \).
\end{prop}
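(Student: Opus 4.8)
The plan is to recast both averages as ergodic averages over the system $(X_\xi,\mathbb{B},\mu,T)$ for which $\xi$ is generic, and then compare them by a soft argument about invariant measures. Write $f(\omega)=\omega_0-d(A)$; this is continuous on the compact metric space $X_\xi$ and $\int_{X_\xi}f\,d\mu=0$ by genericity of $\xi$. Put $g=\prod_{i=1}^{k}T^{b_i}f\in C(X_\xi)$. Then $\xi(n+b_1)\cdots\xi(n+b_k)=g(T^{n}\xi)$ once $n\ge\max_i(-b_i)$, and likewise $\xi(an+b_1)\cdots\xi(an+b_k)=g(T^{an}\xi)$ for $n$ large; the finitely many boundary terms do not affect Cesàro limits. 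By genericity the left-hand average converges to $\int_{X_\xi}g\,d\mu$, so it remains to show that $\frac1N\sum_{n=1}^{N}g(T^{an}\xi)\to\int_{X_\xi}g\,d\mu$. I will in fact prove this with an arbitrary $G\in C(X_\xi)$ in place of $g$.

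Consider the probability measures $\nu_N=\frac1N\sum_{n=1}^{N}\delta_{T^{an}\xi}$ on $X_\xi$. Since the space of probability measures on $X_\xi$ is weak-$*$ compact and metrizable, it suffices to show that every weak-$*$ limit point $\nu$ of $(\nu_N)$ equals $\mu$. Any such $\nu$ is $T^{a}$-invariant. First I would check that $\frac1a\sum_{r=0}^{a-1}T^{r}_{*}\nu=\mu$: if $\nu_{N_k}\to\nu$ and $h\in C(X_\xi)$, then $\int h\,d\left(\tfrac1a\sum_{r=0}^{a-1}T^{r}_{*}\nu\right)=\lim_k\frac{1}{aN_k}\sum_{r=0}^{a-1}\sum_{n=1}^{N_k}h(T^{an+r}\xi)$, and because $\{an+r:1\le n\le N_k,\ 0\le r\le a-1\}$ is precisely the block of consecutive integers $\{a,a+1,\dots,aN_k+a-1\}$, this limit equals $\lim_k\frac{1}{aN_k}\sum_{m=a}^{aN_k+a-1}h(T^{m}\xi)=\int_{X_\xi}h\,d\mu$, again by genericity of $\xi$ for $(X_\xi,T)$.

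Now each push-forward $T^{r}_{*}\nu$ is again $T^{a}$-invariant (since $T^{a}$ commutes with $T^{r}$), so $\mu$ is a convex combination with strictly positive weights of the $T^{a}$-invariant probability measures $T^{0}_{*}\nu,\dots,T^{a-1}_{*}\nu$. Because the system is weakly mixing it is totally ergodic, hence $(X_\xi,\mathbb{B},\mu,T^{a})$ is ergodic, i.e. $\mu$ is an extreme point of the convex set of $T^{a}$-invariant probability measures on $X_\xi$. An extreme point that is written as a convex combination with positive weights must coincide with each summand, so $\mu=T^{0}_{*}\nu=\nu$. Thus $\nu_N\to\mu$ weak-$*$, which yields $\frac1N\sum_{n=1}^{N}G(T^{an}\xi)\to\int_{X_\xi}G\,d\mu$ for every $G\in C(X_\xi)$; applied to $G=g$ this shows the right-hand limit exists and equals $\int_{X_\xi}g\,d\mu$, finishing the proof.

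The only delicate points are the bookkeeping of boundary terms when identifying the averages with orbit sums at $\xi$, and — more importantly — the realization that it is the genericity of $\xi$ for $T$ itself, together with total ergodicity of $\mu$, that pins down $\nu$. A direct attack via the van der Corput lemma would reduce the claim to averages of $g\cdot T^{ah}g$ along the progression $an$, i.e. back to a statement of the same kind, and would therefore be circular; the soft measure-theoretic route avoids this.
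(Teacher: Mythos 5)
Your argument is correct, and it reaches the conclusion by a genuinely different route from the paper. The paper invokes Furstenberg's disjointness theorem: a weakly mixing system is disjoint from every Kronecker system, in particular from the uniquely ergodic rotation on $\mathbb{Z}/a\mathbb{Z}$, so the pair $(\xi,0)$ is generic for the product system, and testing against $f(x)1_0(m)$ extracts exactly the average along the progression $an$. You instead work directly with the empirical measures $\nu_N=\frac1N\sum_{n\le N}\delta_{T^{an}\xi}$, show every weak-$*$ limit $\nu$ is $T^a$-invariant with $\frac1a\sum_{r=0}^{a-1}T^r_*\nu=\mu$, and conclude $\nu=\mu$ from ergodicity of $T^a$ (total ergodicity being a consequence of weak mixing) via extremality of $\mu$ in the simplex of $T^a$-invariant measures. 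The two arguments are morally equivalent --- disjointness from $\mathbb{Z}/a\mathbb{Z}$ amounts to ergodicity of $T^a$, and your decomposition of $\mu$ into the $T^r_*\nu$ is exactly the structure of a joining with the finite rotation --- but yours is self-contained and avoids citing the disjointness machinery, at the cost of a slightly longer compactness/extreme-point argument. Your handling of the boundary terms (negative $b_i$, so that $T^{b_i}f$ only makes sense after a harmless index shift) is the right bookkeeping and matches what the paper leaves implicit. Your closing remark is also apt: a van der Corput reduction would indeed be circular here, which is why a ``soft'' argument of one of these two kinds is needed.
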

\begin{proof}
Consider the weak-mixing measure preserving system \( ( X_{\xi},
\mathbb{B},\mu,T) \).
 %where \( X_{\xi} \doteq
%\overline{\{T^n \xi\}}_{n \in \mathbb{N}} \subset \{ -d(A),1-d(A)
%\}^{\infty}\), \(T\) is the usual shift inside \( \{ -d(A),1-d(A)
%\}^{\infty} \), \( \mathbb{B} \) is the Borel sigma-algebra on \(
%X_{\xi}\) and the measure \( \mu \) is defined by the genericity of
%the point \( \xi \) and by Riesz representation theorem.
\newline
The left side of the equation in the proposition is \( \int_{X_{\xi}}
T^{b_1} f T^{b_2} f \ldots T^{b_k} f d\mu \), where \( f(\omega)
\doteq \omega_0 \) for every infinite sequence inside \( X_{\xi}\).
We make use of the notion of disjointness of measure preserving systems. By \cite{furst2} we know that
every weak-mixing system is disjoint from any Kronecker system which
is a compact monothethic  group with Borel \(\sigma\)-algebra, the
Haar probability measure, and the shift by a chosen
element of the group. In particular, every weak-mixing system is
disjoint from the measure preserving system \(
(\mathbb{Z}_a,\mathbb{B}_{\mathbb{Z}_a}, S , \nu)\), where
 \( \mathbb{Z}_a = \mathbb{Z}/ a\mathbb{Z} \), \( S(n)
\doteq n+1 (\mod a) \). The measure and the \( \sigma\)-algebra of the last
system are uniquely determined. Therefore, from  Furstenberg's
theorem
%about generic points of the product of two disjoint
%measure-preserving systems
(see \cite{furst2}, Theorem I.6) it follows that the point \( (\xi,0) \in X_{\xi}
\times \mathbb{Z}_a \) is a generic point of the product system \(
(X_{\xi} \times \mathbb{Z}_a, \mathbb{B} \times
\mathbb{B}_{\mathbb{Z}_a}, T \times S, \mu \times \nu) \). Thus, for
every continuous function \( g \) on \( X_{\xi} \times \mathbb{Z}_a
\) we obtain
\[
\int_{X_{\xi} \times \mathbb{Z}_a} g(x,m) d\mu(x) d \nu(m) = \lim_{N
\rightarrow \infty}  \frac{1}{N} \sum_{n=1}^N g(T^n \xi, S^n 0).
\]
Let \( g (x,m) \doteq f(x)1_0(m) \), which is obviously continuous on
\( X_{\xi} \times \mathbb{Z}_a \). Then  genericity of the point \(
(\xi,0) \) yields
\[
\int_{X_{\xi} \times \mathbb{Z}_a} f(x)1_0(m) d\mu(x) d \nu(m) =
\frac{1}{a} \int_{X_{\xi}} f(x) d \mu(x) = \]
\[
\lim_{N \rightarrow \infty}  \frac{1}{N} \sum_{n=1}^N f(T^n \xi)
1_0(n) = \lim_{N \rightarrow \infty}  \frac{1}{a} \frac{1}{N}
\sum_{n=1}^N f(T^{an} \xi).
\]
Taking instead of the function \( f \) the continuous function \(
T^{b_1}f T^{b_2} f \ldots T^{b_k} f\)  in the definition of \( g \)
finishes the proof.

\hspace{12cm} \qed
\end{proof}

%\noindent The following lemma is simple fact that for a weak-mixing
%system \( X \) not only an average of shifts for a function converge
%to a constant in \( L^2 \) norm but also  weighted averages (weights
%are bounded) converge to the same constant.
%\begin{lemma}
%\label{wm_sub_lemma1} Let \( ( X, \mathbb{B}, \mu, T ) \) be a
%weak-mixing system  and \( f \in L^{2}(X) \) with \( \int_X f d\mu =
%0 \). Let \( \varepsilon > 0 \). Then there exists \( \mathbb{J}
%> 0 \) such that for any \( J
%> \mathbb{J} \) we have
%\[
%%\label{wm_sub_lemma1_main_formula}
% \left\| \frac{1}{J} \sum_{j=1}^J b_j T^{j} f \right\|_{L^2(X)} <
% \varepsilon
%\]
%for any sequence \(  b = (b_1,b_2, \ldots , b_n, \ldots) \in \{ 0 ,1
%\}^\mathbb{N}  \).
%\end{lemma}
%%%%%%%%%%%%%%%%%%%%%%%%%%%%%%%%%%
%%%%%%%%%%%%%%%%%%%%%%%%%%%%%%%%%
%\begin{proof}
%Let \( \varepsilon > 0 \).
%\newline
% By one of the properties of weak
%mixing, for any \( f \in L^2(X) \) with \( \int_X f d \mu(x) = 0 \)
%we have \( \frac{1}{N} \sum_{n=1}^N |<T^n f, f>| \rightarrow 0 \).
%\newline
%We denote by \( c_n = c_{(-n)}= |<T^n f, f>| \) and we have that \(
%\frac{1}{N} \sum_{n=1}^N c_n \rightarrow 0 \). Then for any \(
%\varepsilon > 0 \) there exists \( \mathbb{J} > 0 \) such that for
%any \( J
%> \mathbb{J} \) we have
%\[
%\left\| \frac{1}{J} \sum_{j=1}^J b_j T^{j} f \right\|^2 \leq
%\frac{1}{J^2} \sum_{j=1,k=1}^J b_j b_k c_{j-k} \leq \frac{1}{J^2}
%\sum_{j=1,k=1}^J c_{j-k} \leq \varepsilon.
%\]

%\hspace{12cm} \qed
%\end{proof}

%%%%%%%%%%%%%%%%%%%%%%%%%%%%%%%%%%%%%%%%%%%%
%%%%%%%%%%%%%%%%%%%%%%%%%%%%%%%%%%%%%%%%%%%%5
\noindent The next two lemmas are very useful for constructing
normal sets with specifical properties.
\begin{lemma}
\label{norm_lemma} Let \( A \subset \mathbb{N} \). Let \( \lambda(n)
= 2 1_A(n) - 1 \). Then \( A \) is a normal set \( \Leftrightarrow \)
for any \(k \in (\mathbb{N} \cup \{0\}) \) and any \( i_1 < i_2 <
\ldots < i_k \) we have
\[
 \lim_{N \rightarrow \infty} \frac{1}{N} \sum_{n=1}^N \lambda(n) \lambda(n+i_1) \ldots \lambda(n+i_k)
 =0.
\]
\end{lemma}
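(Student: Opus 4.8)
The plan is to connect normality to the convergence of these "correlation" averages via a standard Weyl-type / Fourier argument. Recall that $A$ is normal exactly when $1_A$ is a generic point of the Bernoulli $(\tfrac12,\tfrac12)$ system, equivalently when every finite binary word $w$ of length $k+1$ occurs in $1_A$ with frequency $2^{-(k+1)}$. First I would reformulate this word-frequency condition in terms of the functions $\lambda(n)=2\cdot 1_A(n)-1\in\{-1,+1\}$. Fix a block of positions $0=j_0<j_1<\dots<j_k$ (one may as well take $i_1<\dots<i_k$ with $i_0:=0$, since prepending $0$ is harmless) and a sign pattern $\varepsilon=(\varepsilon_0,\dots,\varepsilon_k)\in\{-1,+1\}^{k+1}$. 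The indicator that $(\lambda(n),\lambda(n+i_1),\dots,\lambda(n+i_k))=\varepsilon$ can be written as the product $\prod_{r=0}^k \tfrac12\bigl(1+\varepsilon_r\lambda(n+i_r)\bigr)$. Expanding this product, the frequency of the pattern $\varepsilon$ among the positions $\{0,i_1,\dots,i_k\}$ equals $2^{-(k+1)}$ for \emph{every} $\varepsilon$ if and only if every non-empty-subset correlation $\frac1N\sum_{n=1}^N\prod_{r\in T}\lambda(n+i_r)$ tends to $0$, for every non-empty $T\subseteq\{0,i_1,\dots,i_k\}$; conversely $2^{-(k+1)}$-frequency for all $\varepsilon$ forces all these correlations to vanish by Fourier inversion over the group $\{-1,+1\}^{k+1}$.

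Spelling this out: the direction "($\Rightarrow$)" is the clean one. Assume $A$ is normal. Given $0<i_1<\dots<i_k$, consider the shift $n\mapsto(1_A(n),1_A(n+i_1),\dots,1_A(n+i_k))$; by normality each of the $2^{k+1}$ values $\delta\in\{0,1\}^{k+1}$ is attained with frequency $2^{-(k+1)}$ (this is exactly the statement that the word of $1_A$ read at the arithmetic positions $0,i_1,\dots,i_k$ — embedded in a long enough window — is equidistributed, which follows from equidistribution of \emph{all} length-$(i_k+1)$ subwords). Then
\[
\frac1N\sum_{n=1}^N \lambda(n)\lambda(n+i_1)\cdots\lambda(n+i_k)
=\sum_{\delta\in\{0,1\}^{k+1}} (-1)^{\sum_r(1-\delta_r)}\cdot\frac{\#\{n\le N:\,(1_A(n+i_r))_r=\delta\}}{N},
\]
and letting $N\to\infty$ each summand converges to $(-1)^{\sum_r(1-\delta_r)}2^{-(k+1)}$; summing, the $\pm$ signs cancel in pairs and the limit is $0$. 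The case $k=0$ is just $\frac1N\sum\lambda(n)\to 2\cdot\tfrac12-1=0$.

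For "($\Leftarrow$)" I would run the expansion in reverse. Assume all the correlation limits vanish. Fix a length $\ell=k+1$ and a target word; by the reduction above it suffices to show every word of length $\ell$ on every set of positions $0,i_1,\dots,i_k$ has frequency $2^{-\ell}$ — and in particular this applies to consecutive positions $i_r=r$, giving normality. For a fixed sign pattern $\varepsilon$,
\[
\frac{\#\{n\le N:\,(\lambda(n+i_r))_r=\varepsilon\}}{N}
=\frac1N\sum_{n=1}^N\prod_{r=0}^k\frac{1+\varepsilon_r\lambda(n+i_r)}{2}
=2^{-(k+1)}\sum_{T\subseteq\{0,i_1,\dots,i_k\}}\Bigl(\prod_{r\in T}\varepsilon_r\Bigr)\frac1N\sum_{n=1}^N\prod_{r\in T}\lambda(n+i_r);
\]
the $T=\emptyset$ term contributes $2^{-(k+1)}$, and by hypothesis (after a harmless shift of the index by $\min T$, using that $\frac1N\sum_{n\le N}g(n+c)$ and $\frac1N\sum_{n\le N}g(n)$ have the same limit) every $T\ne\emptyset$ term tends to $0$. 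Hence the frequency converges to $2^{-(k+1)}$, as needed. The only mildly delicate point — and the one I would be most careful about — is the equivalence between "every fixed-length \emph{consecutive} subword is equidistributed" (the definition of normal) and "every fixed-length subword read at an arbitrary fixed set of offsets $0,i_1,\dots,i_k$ is equidistributed": one inclusion is trivial (offsets $0,1,\dots,k$), and the other follows because a word on offsets $\le i_k$ is determined by, and its frequency computed from, the frequencies of all consecutive words of length $i_k+1$. Everything else is bookkeeping with the finite Fourier transform on $\{-1,+1\}^{k+1}$. $\Box$
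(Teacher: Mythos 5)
Your proof is correct and follows essentially the same route as the paper: expand the indicator of a word occurrence as a product of factors $\tfrac12(1+\varepsilon_r\lambda(n+i_r))$ and use the finite Fourier expansion over $\{-1,+1\}^{k+1}$ so that the empty-subset term gives $2^{-(k+1)}$ and all nontrivial correlations vanish. Your treatment is in fact somewhat more careful than the paper's, since you explicitly justify the passage from consecutive words to words read at arbitrary offsets and the harmless index shift for subsets $T$ not containing the offset $0$.
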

\begin{proof}
``\(\Rightarrow\)" If \( A \) is normal then any finite word \( w \in
\{ -1, 1\}^{*} \)
 has the ``right" frequency \( \frac{1}{2^{|w|}}\) inside
\( w_{A} \). This guarantees that ``half of the time" the function \(
\lambda(n) \lambda(n+i_1) \ldots \lambda(n+i_k) \) equals \(1 \) and
``half of the time" is equal to \( -1\).
 Therefore we get the desired conclusion.
\newline
``\( \Leftarrow \)"  Let \( w \) be an arbitrary finite word of plus
and minus ones: \( w = a_1 a_2 \ldots a_k \) and we have to prove
that \( w \) occurs in \( w_{A}\) with the frequency \( 2^{-k} \).
For every \( n \in \mathbb{N} \) the word \( w \) occurs in \( 1_A \) and starting from \( n \)
if and only if
\[
 \left\{ \begin{array}{lll} 1_A(n) = a_1 \\
 \ldots \\
1_A(n+k-1) = a_k
\end{array}
\right.
\]
The latter is equivalent to the following
\[
 \left\{ \begin{array}{lll} \lambda(n) = 2a_1 - 1 \\
 \ldots \\
\lambda(n+k-1) = 2 a_k -1
\end{array}
\right.
\]
The frequency of \( w \) within \( 1_A \) is equal to
\[
  \lim_{N \rightarrow \infty} \frac{1}{N} \sum_{n=1}^N \frac{\lambda(n) (2 a_1 -1) +1}{2} \ldots
  \frac{\lambda(n+k-1) (2 a_k - 1) + 1}{2}.
\]
The limit is equal to \( \frac{1}{2^k}\).

\hspace{12cm} \qed
\end{proof}
%%%%%%%%%%%%%%%%%%%%%%%%%%%%%%%%%%%%%%%%%

\begin{lemma}
\label{tec_lemma} Let \( \{ a_n \} \) be a bounded sequence. Let
 \( T_N = \frac{1}{N} \sum_{n=1}^N a_n \). Then \( T_N \)
converges to a limit \( t \) \( \Leftrightarrow \) there exists a
sequence of increasing indices \( \{ N_i \} \) such that \(
\frac{N_i}{N_{i+1}} \rightarrow 1 \) and \( T_{N_i} \rightarrow_{i
\rightarrow \infty} t \).
\end{lemma}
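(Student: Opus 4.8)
The plan is to prove the two implications separately, the forward one being essentially trivial and the reverse one being the substance of the lemma. For ``\(\Rightarrow\)'': if \(T_N \to t\), then any subsequence converges to \(t\), so in particular \(T_{N_i} \to t\) for \(N_i = i\), which satisfies \(N_i/N_{i+1} = i/(i+1) \to 1\). So only the reverse direction requires work.

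For ``\(\Leftarrow\)'', suppose \(|a_n| \le C\) for all \(n\), that \(\{N_i\}\) is increasing with \(N_i/N_{i+1} \to 1\), and that \(T_{N_i} \to t\). Fix an arbitrary \(N \in \mathbb{N}\) large, and let \(i = i(N)\) be the unique index with \(N_i \le N < N_{i+1}\). The idea is to compare \(T_N\) with \(T_{N_i}\) by writing
\[
N \, T_N = N_i \, T_{N_i} + \sum_{n=N_i+1}^{N} a_n,
\]
so that
\[
T_N = \frac{N_i}{N} T_{N_i} + \frac{1}{N}\sum_{n=N_i+1}^{N} a_n.
\]
The second term is bounded in absolute value by \(C \cdot \frac{N - N_i}{N} \le C \cdot \frac{N_{i+1} - N_i}{N_i} = C\big(\frac{N_{i+1}}{N_i} - 1\big)\), which tends to \(0\) as \(N \to \infty\) because then \(i(N) \to \infty\) and \(N_{i+1}/N_i \to 1\). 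The first term satisfies \(\frac{N_i}{N} T_{N_i} \to t\), since \(T_{N_i} \to t\) and \(\frac{N_i}{N} \to 1\) (as \(1 \ge \frac{N_i}{N} \ge \frac{N_i}{N_{i+1}} \to 1\)). Combining, \(T_N \to t\).

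I do not anticipate a genuine obstacle here; the only point requiring a little care is to make the \(\varepsilon\)-bookkeeping uniform, i.e. to note that the estimates on both terms depend on \(N\) only through \(i(N)\), which marches off to infinity as \(N\) does. One should also record the harmless fact that \(T_N\) is automatically bounded (by \(C\)), so there is no issue of the limit failing to exist for trivial reasons; the argument above in fact shows directly that the sequence \(T_N\) is Cauchy-like against the constant \(t\).
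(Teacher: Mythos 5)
Your proof is correct, and the interpolation argument (locating $N$ between $N_i$ and $N_{i+1}$, splitting $T_N = \frac{N_i}{N}T_{N_i} + \frac{1}{N}\sum_{n=N_i+1}^{N}a_n$, and using boundedness to kill the tail) is exactly the standard one; the paper in fact states this lemma in the Appendix without any proof, so there is nothing to compare against. The only cosmetic point is to note explicitly that $i(N)\to\infty$ as $N\to\infty$, which you do.
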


\textit{Current Address: \\
Department of Mathematics \\ University of Wisconsin-Madison
 \\ 480 Lincoln Dr. \\ Madison, WI 53706-1388 \\ USA \\
E-mail: afish@math.wisc.edu}


\begin{thebibliography}{150}

%\bibitem{berg2} Bergelson, V. Sets of recurrence of $Z^m$-actions and properties of sets of differences
%in $Z^m$. J. London Math. Soc. (2) 31 (1985), no. 2, 295--304.
\bibitem{berg_pet} Bergelson, V. Weakly mixing PET. Ergodic Theory Dynam. Systems 7 (1987), no. 3, 337--349.
\bibitem{b_mctch} Bergelson, V.; McCutcheon, R. An ergodic IP polynomial Szemer\'{e}di theorem.  Mem. Amer. Math. Soc.  146  (2000),  no. 695.
%\bibitem{berg_furst_mc} Bergelson, V.; Furstenberg, H.; McCutcheon, R. IP-sets and polynomial recurrence.
% Ergodic Theory Dynam. Systems 16 (1996), no. 5, 963--974.
%\bibitem{berg1} Bergelson, V.; Hindman, N. Additive and multiplicative
%Ramsey theorems in $N$---some elementary results. Combin. Probab.
%Comput. 2 (1993), no. 3, 221--241.
%\bibitem{berg_leibman} Bergelson, V.; Leibman, A. Polynomial extensions of van der Waerden's
%and Szemer\'{e}di's theorems. J. Amer. Math. Soc. 9 (1996), no. 3,
%725--753.
% \bibitem{liouv_1} Cassaigne, J.; Ferenczi, S.; Mauduit, C.; Rivat, J.; Sark\"{o}zy, A.
% On finite pseudorandom binary sequences. III. The Liouville function. I. Acta Arith. 87 (1999), no. 4, 367--390.
% \bibitem{liouv_2} Cassaigne, J.; Ferenczi, S.; Mauduit, C.; Rivat, J.; Sark\"{o}zy, A.
%  On finite pseudorandom binary sequences. IV. The Liouville function. II. Acta Arith. 95 (2000), no. 4, 343--359.
\bibitem{fish1} Fish, A.  Random Liouville functions and normal
sets. Acta Arith. 120 (2005), no. 2, 191--196.
\bibitem{Fish_sumset_phenomenon} Fish, A. Polynomial largeness of sumsets and totally ergodic sets, see http://arxiv.org/abs/0711.3201.
\bibitem{fish4} Fish, A. Ph.D. thesis, Hebrew University, 2006.
%\bibitem{fish2} Fish, A.  Weak-mixing sets and their basic Ramsey
%properties, preprint.

\bibitem{furst2}  Furstenberg, H. Disjointness in ergodic theory, minimal sets, and a problem in Diophantine approximation.
 Math. Systems Theory 1 (1967), 1-49.
 \bibitem{furst4} Furstenberg, H. Ergodic behavior of diagonal measures and a theorem of Szemer\'{e}di on arithmetic
  progressions. J. d' Analys Math. 31 (1977), 204--256.
\bibitem{furst3} Furstenberg, H. Recurrence in Ergodic Theory and
Combinatorial Number Theory. Princeton Univ. Press 1981.
\bibitem{host-kra} Host, B.; Kra, B. Nonconventional ergodic averages
 and nilmanifolds. Ann. of Math. (2) 161 (2005), no. 1, 397--488.
%\bibitem{host_kra_2} Host, B.; Kra,B. Convergence of polynomial
% ergodic averages. Israel J. Math. 149 (2005), 1--19.
\bibitem{rado} Rado, R. Note on combinatorial analysis. Proc. London Math. Soc. 48 (1943), 122--160.
\bibitem{schur} Schur, I. Uber die Kongruenz \( x^m + y^m \equiv z^m (mod p) \). Jahresbericht der Deutschen Math.-Ver.
25 (1916), 114--117.
\bibitem{szemeredi} Szemer\'{e}di, E.
On sets of
integers containing no $k$ elements in arithmetic progression.
 Collection of articles in memory of Juri\v{i} Vladimirovi\v{c} Linnik. Acta Arith. 27 (1975), 199--245.
\end{thebibliography}
\end{document}